\newif\ifspringer
\providecommand{\backrefalt}[4]{} 
\newtheorem*{belcher}{Belcher's Criterion}
\theoremstyle{definition}
\newcommand{\sternsymbol}{\raisebox{-0.2em}{\Asterisk}}
\newtheorem*{stern}{\sternsymbol}
\begin{document}


\ifspringer


\title{On linear combinations of units with
  bounded coefficients and double-base digit expansions
  \thanks{Daniel Krenn and J\"org Thuswaldner are supported by the Austrian
    Science Fund (FWF): W1230, Doctoral Program ``Discrete Mathematics''.}
  \thanks{Daniel Krenn is supported by the Austrian Science Foundation (FWF):
    S9606, that is part of the Austrian National Research Network ``Analytic
    Combinatorics and Probabilistic Number Theory''.}
  \thanks{J\"org Thuswaldner and Volker Ziegler are supported by the ``Aktion
    \"Osterreich-Ungarn'', grant No. 80oeu6.}
}

\author{Daniel Krenn \and Jörg Thuswaldner \and Volker Ziegler}

\institute{D.\ Krenn \at
  Institute of Optimisation and Discrete Mathematics (Math B) \\
  Graz University of Technology \\
  Steyrergasse 30/II, A-8010 Graz, Austria \\
  \email{\href{mailto:math@danielkrenn.at}{math@danielkrenn.at} \textit{or}
    \href{mailto:krenn@math.tugraz.at}{krenn@math.tugraz.at}}
  \and
  J.\ Thuswaldner \at
  Chair of Mathematics and Statistics \\
  University of Leoben \\
  Franz-Josef-Strasse 18, A-8700 Leoben, Austria \\
  \email{\href{mailto:Joerg.Thuswaldner@unileoben.ac.at}{Joerg.Thuswaldner@unileoben.ac.at}}
  \and
  V.\ Ziegler \at
  Institute of Analysis and Computational Number Theory (Math A) \\
  Graz University of Technology \\
  Steyrergasse 30/II, A-8010 Graz, Austria \\
  \email{\href{mailto:ziegler@math.tugraz.at}{ziegler@math.tugraz.at}}
  \and}  

\date{Received: date / Accepted: date}


\else 


\title[On linear combinations of units]{On linear combinations of units with
  bounded coefficients and double-base digit expansions}

\subjclass[2010]{11R16,11R11,11A63,11R67}

\keywords{Unit sum number; additive unit structure; digit expansions}

\date{\today}


\author[D. Krenn]{Daniel Krenn}

\address{\parbox{12cm}{%
    Daniel Krenn \\
    Institute of Optimisation and Discrete Mathematics (Math B) \\
    Graz University of Technology \\
    Steyrergasse 30/II, A-8010 Graz, Austria \\}}

\email{\href{mailto:math@danielkrenn.at}{math@danielkrenn.at} \textit{or}
  \href{mailto:krenn@math.tugraz.at}{krenn@math.tugraz.at}}


\author[J. Thuswaldner]{Jörg Thuswaldner}

\address{\parbox{12cm}{%
    Jörg Thuswaldner \\
    Chair of Mathematics and Statistics \\
    University of Leoben \\
    Franz-Josef-Strasse 18, A-8700 Leoben, Austria \\}}

\email{\href{mailto:Joerg.Thuswaldner@unileoben.ac.at}{Joerg.Thuswaldner@unileoben.ac.at}}


\author[V. Ziegler]{Volker Ziegler}

\address{\parbox{12cm}{%
    Volker Ziegler \\
    Institute of Analysis and Computational Number Theory (Math A) \\
    Graz University of Technology \\
    Steyrergasse 30/II, A-8010 Graz, Austria \\}}

\email{\href{mailto:ziegler@math.tugraz.at}{ziegler@math.tugraz.at}}


\thanks{Daniel Krenn and J\"org Thuswaldner are supported by the Austrian
  Science Fund (FWF): W1230, Doctoral Program ``Discrete Mathematics''.}

\thanks{Daniel Krenn is supported by the Austrian Science Foundation (FWF):
  S9606, that is part of the Austrian National Research Network ``Analytic
  Combinatorics and Probabilistic Number Theory''.}

\thanks{J\"org Thuswaldner and Volker Ziegler are supported by the ``Aktion
  \"Osterreich-Ungarn'', grant No. 80oeu6.}


\fi 


\ifspringer
\maketitle
\else
\fi

\begin{abstract} 
  
  Let $\ord$ be the maximal order of a number field. Belcher showed in the
  1970s that every algebraic integer in $\ord$ is the sum of pairwise distinct
  units, if the unit equation $u+v=2$ has a non-trivial solution
  $u,v\in\ord^*$. We generalize this result and give applications to
  signed double-base digit expansions.

\ifspringer
\keywords{unit sum number \and additive unit structure \and digit expansions}
\subclass{11R16 \and 11R11 \and 11A63 \and 11R67}
\fi
\end{abstract}

\ifspringer
\else
\maketitle
\fi


\section{Introduction}
\label{sec:intro}

In the 1960s Jacobson~\cite{Jacobson:1964} asked, whether the number fields
$\Q(\sqrt{2})$ and $\Q(\sqrt{5})$ are the only quadratic number fields such
that each algebraic integer is the sum of distinct
units. \'Sliwa~\cite{Sliwa:1974} solved this problem for quadratic number
fields and showed that even no pure cubic number field has this property. These
results were extended to cubic and quartic fields by
Belcher~\cite{Belcher:1974,Belcher:1976}. In particular, Belcher solved the
case of imaginary cubic number fields completely by applying the following
criterion, which now bears his name, cf.~\cite{Belcher:1976}.

\begin{belcher}\label{Belcher-th}
  Let $F$ be a number field and $\ord$ the maximal order of
  $F$. Assume that the unit equation
  \begin{equation*}
    u+v=2, \qquad u,v\in\ord^*
  \end{equation*}
  has a solution $(u,v)\neq(1,1)$.
  Then each algebraic integer in $\ord$ is the sum of distinct units.
\end{belcher}

The problem of characterizing all number fields in which every algebraic
integer is a sum of distinct units is still unsolved. Let us note that this
problem is contained in Narkiewicz' list of open problems in his famous
book~\cite[see page 539, Problem~18]{Narkiewicz:1974}.

Recently the interest in the representation of algebraic integers as sums of
units arose due to the contribution of Jarden and Narkiewicz
\cite{Jarden:2007}. They showed that in a given number field there does not
exist an integer $k$, such that every algebraic integer can be written as the
sum of at most~$k$ (not necessarily distinct) units. For an overview on this
topic we recommend the survey paper due to Barroero, Frei, and
Tichy~\cite{Barroero:2011}. Recently Thuswaldner and
Ziegler~\cite{Thuswaldner:2010} considered the following related problem. Let
an order $\ord$ of a number field and a positive integer $k$ be given. Does
each element $\alpha \in \ord$ admit a representation as a linear combination
$\alpha = c_1 \varepsilon_1 + \dots + c_\ell \varepsilon_\ell$ of units
$\varepsilon_1,\dots,\varepsilon_\ell \in \ord^*$ with coefficients $c_i\in
\{1,\dots, k\}$\,? This problem was attacked by using dynamical methods from
the theory of digit expansions. In the present paper we address this problem
again. In particular, we wish to generalize Belcher's criterion in a way to
make it applicable to this problem.

In order to get the most general form, we refine the definition of the unit sum
height given in \cite{Thuswaldner:2010}.

\begin{definition}
  Let $F$ be some field of characteristic $0$, $\Gamma$ be a finitely generated
  subgroup of $F^*$, and $R\subset F$ be some subring of $F$. Assume that
  $\alpha\in R$ can be written as a linear combination
  \begin{equation}\label{lincomb}
    \alpha=a_1\nu_1 +\dots+a_\ell \nu_\ell ,
  \end{equation}
  where $\nu_1,\dots,\nu_\ell \in \Gamma \cap R$ are pairwise distinct and
  $a_1\geq \dots \geq a_\ell>0$ are integers. If (in case there exists more
  than one representation of the form \eqref{lincomb}) $a_1$ in \eqref{lincomb}
  is chosen as small as possible, we call $\omega_{R,\Gamma}(\alpha)=a_1$ the
  \emph{$R$-$\Gamma$-unit sum height of $\alpha$}. In addition we define
  $\omega_{R,\Gamma}(0)\colonequals 0$ and
  $\omega_{R,\Gamma}(\alpha)\colonequals \infty$ if $\alpha$ admits no
  representation as a finite linear-combination of elements contained in
  $\Gamma \cap R$.  Moreover, we define
  \begin{equation*}
    \omega_\Gamma(R)=\max\set*{\omega_{R,\Gamma}(\alpha)}{\alpha\in R}
  \end{equation*}
  if the maximum exists. If the maximum does not exist we write
  \begin{equation*}
    \omega_\Gamma(R)=
    \begin{cases}
      \omega
      & \text{if $\omega_{R,\Gamma}(\alpha)< \infty$ for each $\alpha\in R$,}\\
      \infty
      & \text{if there exists $\alpha\in R$
        such that $\omega_{R,\Gamma}(\alpha)= \infty$.}\\
    \end{cases}
  \end{equation*}
\end{definition}

Let us note that for a number field $F$ with the group of units $\Gamma$ of an
order $\ord$ of $F$ we have $\omega_\Gamma(\ord) = \omega(\ord)$, where
$\omega(\ord)$ is the unit sum height defined in~\cite{Thuswaldner:2010}.

With those notations our main result is the following.

\begin{theorem}\label{Th:GenBelcher}
  Let $F\subset \C$ be a field and $\Gamma$ a finitely generated subgroup of
  $F^*$ with $-1\in\Gamma$. Let $R$ be a subring of $F$ that is
  generated as a $\Z$-module by a finite set $\cE \subset \Gamma \cap R$.
  Assume that for given integers $n\geq I \geq 2$ the equation
  \begin{equation}\label{Eq:Unit}
    u_1+\dots+u_I=n, \qquad u_1,\ldots,u_I \in \Gamma \cap R
  \end{equation}
  has a solution $(u_1,\ldots,u_I)\neq(1,\ldots,1)$. Then we have
  $\omega_\Gamma(R) \leq n-1$.
\end{theorem}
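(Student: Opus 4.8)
The plan is to show that every $\alpha \in R$ can be written as a linear combination of pairwise distinct elements of $\Gamma \cap R$ with all coefficients in $\{1, \dots, n-1\}$. I would proceed in two stages. First, a reduction stage: since $R$ is generated as a $\Z$-module by the finite set $\cE \subset \Gamma \cap R$, every $\alpha \in R$ is a $\Z$-linear combination $\alpha = \sum_{e \in \cE} m_e e$ with $m_e \in \Z$. Using $-1 \in \Gamma$ (so that $-e \in \Gamma \cap R$ whenever $e \in \Gamma \cap R$), it suffices to handle non-negative coefficients, and by splitting a coefficient $m_e$ into $m_e$ copies of $e$ it suffices to represent each element of the form $N e$ with $N \in \Z_{\geq 0}$ and $e \in \cE$, and then add these representations together; a final bookkeeping step merges repeated units and reduces any coefficient that became too large. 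The crux is therefore a \emph{carry/base-$n$ digit argument}: I want to show that for each fixed $e \in \Gamma \cap R$ and each $N \geq 0$, the element $Ne$ is a linear combination of pairwise distinct units from $\Gamma \cap R$ with coefficients at most $n-1$.

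The key mechanism is equation~\eqref{Eq:Unit}. Write the nontrivial solution as $u_1 + \dots + u_I = n$ with not all $u_i$ equal to $1$. Multiplying through by any $w \in \Gamma \cap R$ gives $u_1 w + \dots + u_I w = n w$, i.e. a way to ``trade'' a coefficient of $n$ in front of a unit $w$ for coefficients of $1$ in front of the (not necessarily distinct) units $u_1 w, \dots, u_I w \in \Gamma \cap R$. This is exactly the carry rule of a base-$n$ expansion: given $Ne = (qn + r)e$ with $0 \le r \le n-1$, we replace $ne$ by $u_1 e + \dots + u_I e$ and are left with needing to represent $qe$ plus the already-produced units with bounded coefficients, together with the digit $r$ in front of $e$. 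Iterating, $q$ strictly decreases, so after finitely many steps we terminate — but at each step the ``trade'' introduces new units $u_i e$, $u_i^2 e$, etc., so the set of units in play grows. I would make the iteration precise by an induction on $N$ (strong induction, or on the number of base-$n$ digits), carrying along the invariant that at every stage the current expression is a linear combination of units in $\Gamma \cap R$ with \emph{all} coefficients in $\{0, 1, \dots, n-1\}$ except possibly a single ``active'' coefficient of the form $(\text{something})\cdot e'$ that we are still processing.

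The main obstacle — and the place where the nontriviality hypothesis $(u_1, \dots, u_I) \ne (1, \dots, 1)$ is essential — is the \textbf{distinctness} requirement and the \textbf{termination} of the carrying process. If every $u_i = 1$, the trade $ne \rightsquigarrow ne$ does nothing and one never makes progress; nontriviality guarantees that at least one $u_i \ne 1$, which is what lets the recursion genuinely move to units we have not yet saturated. I expect one needs a careful argument that the collisions (two carries landing a coefficient on the same unit $w \in \Gamma \cap R$) can always be resolved: whenever a coefficient in front of some $w$ reaches $n$, apply the trade again at $w$; one must check this cascading process halts. I would control this by a potential/measure argument: embed everything in $\C$ and use that $\Gamma$ is finitely generated, so the units appearing have bounded ``height'' / lie in a finitely generated group, and track a monovariant (e.g. the multiset of coefficients ordered suitably, or a sum weighted by absolute values in a chosen archimedean embedding) that strictly decreases under each trade. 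Once termination and the coefficient bound $\le n-1$ are established for each generator $e$, summing the finitely many representations (one per generator, with multiplicities $m_e$) and running one more round of the carrying procedure to absorb overflow yields a representation of $\alpha$ with all coefficients $\le n-1$, hence $\omega_{R,\Gamma}(\alpha) \le n-1$, and taking the maximum over $\alpha \in R$ gives $\omega_\Gamma(R) \le n-1$.
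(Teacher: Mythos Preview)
Your outline has the right mechanism (the ``trade'' $nw \rightsquigarrow u_1w+\dots+u_Iw$ is exactly the replacement step the paper uses), but there is a genuine gap at the point you yourself flag: termination of the cascading carries. You propose to find a monovariant that ``strictly decreases under each trade''. In the critical case $I=n$ this cannot be any function of the multiset of coefficients alone, because a trade replaces one coefficient $\ge n$ by $n$ increments of $1$ elsewhere and hence preserves the total weight; no lexicographic or multiset ordering on the coefficients moves. The paper's argument goes the other way: using Cauchy--Schwarz and the nontriviality hypothesis one shows that the archimedean quantity $\sum a_{k,\ell,\bfx}\,|\bfeps^{\bfx}|^{2}$ strictly \emph{increases} with every trade, so no representation ever repeats (even up to a shift in the exponent lattice). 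That alone does not give halting, since this quantity is unbounded. The paper then combines it with a pigeonhole count: either after a bounded number of trades all coefficients are already $\le n-1$, or the support in the exponent lattice $\Z^{M}$ has spread so far that it contains a long gap in some coordinate. In the gap case one splits $\alpha=\gamma+\delta$ across the gap; each piece has strictly smaller total weight, so the induction hypothesis applies, and the gap guarantees the two processed pieces do not overlap when recombined. Your sketch is missing this entire gap-splitting induction, and without it a bare monovariant will not close the argument.

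A secondary issue is the reduction ``do each $Ne$ separately, then add the results and run one more round of carrying''. That last step is not bookkeeping: adding finitely many representations with coefficients in $\{0,\dots,n-1\}$ can produce arbitrarily large coefficients, and absorbing them is again the full problem you are trying to solve. The paper avoids this circularity by working from the start with a single representation of $\alpha$ (indexed over a carefully chosen lattice $\Z^{M}$ of exponents of multiplicatively independent units) and inducting on its total weight. I would drop the per-generator decomposition and instead set up the exponent lattice as in the paper, then focus your effort on the halting argument: non-repetition via a strictly monotone archimedean weight, and the pigeonhole/gap dichotomy that feeds the induction on total weight.
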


The following section, Section~\ref{sec:proof}, is devoted to the proof of
Theorem~\ref{Th:GenBelcher}. In the third section we apply our main theorem,
Theorem~\ref{Th:GenBelcher}, to some special orders of Shanks' simplest cubic
fields. A special case of that theorem yields applications to double-base
expansions. There we choose $F=\Q$, $R=\Z$ and $\Gamma=\langle -1,p,q\rangle$,
where $p$ and $q$ are coprime integers. We discuss that in
Section~\ref{sec:double-bases}.


\section{Proof of Theorem \ref{Th:GenBelcher}}
\label{sec:proof}


We start this section by giving a short plan of the proof.


\begin{proof}[Plan of Proof]
  Let $\alpha\in R$ be arbitrary. Our goal is to find a representation of
  $\alpha$ of the form \eqref{lincomb} in which the coefficients $a_1,\ldots,
  a_\ell$ are all bounded by $n-1$.  We first show that $\alpha$ can be
  represented as a linear combination of the form \eqref{lincomb} with
  $\nu_1,\ldots,\nu_\ell$ chosen in a particular way.  The idea of the proof is
  rather simple and is based on induction over the total weight of this
  representation (this is the sum of all of its coefficients, see
  Definition~\ref{Def:weight}). Start with a representation of $\alpha$ as
  above and choose a coefficient which is greater than or equal to $n$ (if such
  a coefficient does not exist, we are finished). Now
  apply~\eqref{Eq:Unit}. This leads to a new representation of $\alpha$ of the
  form \eqref{lincomb} whose total weight does not increase (and actually
  remains the same after excluding some trivial cases). This process is now
  repeated until we either have a representation in which all coefficients are
  bounded by $n-1$, or the support of the representation contains big gaps. In
  the first case we are finished. In the second case we can split the
  representation in two parts which are separated by a large gap. The total
  weight of each part is less than the total weight of the original
  representation of $\alpha$. We thus use the induction hypothesis on both of
  them, so we get a new representation of each part with coefficients bounded
  by $n-1$. Now, since the gap between the supports of these two parts is
  large, they do not overlap after we apply~\eqref{Eq:Unit} to them in the
  appropriate way and we can put them together to find a representation as
  desired also in this case.
\end{proof}


Now we start with the proof of Theorem~\ref{Th:GenBelcher}. First we introduce
some notations. For integers $a$ and $b$ we write
\begin{equation*}
  \intint{a}{b} \colonequals  \set{a,a+1,\dots,b}
\end{equation*}
for the integers in the interval from $a$ to $b$. For tuples
$\bfx=\tuple{x_1,\dots,x_M}$ and $\bfeps=\tuple{\eps_1,\dots,\eps_M}$ we set
\begin{equation*}
  \bfeps^\bfx \colonequals  \eps_1^{x_1} \dots \eps_M^{x_M}.
\end{equation*}


Observe first that each element of $R$ has at least one representation of the
form \eqref{lincomb}. The coefficients of that representation are integers, but
not necessarily smaller than $n$.


\begin{beh}\label{beh:existence}
  There exists a $K$\nbd-th root of unity $\zeta$, elements
  $\eta_1,\dots,\eta_L \in \cE$, and multiplicatively independent
  elements $\eps_1,\dots,\eps_M \in \Gamma \cap R$, abbreviated as
  $\bfeps=\tuple{\eps_1,\dots,\eps_M}$, such that 
  \begin{equation*}
    u_i = \zeta^{k_i} \bfeps^{\bfr^{(i)}}, \qquad i\in\intint{1}{I},
  \end{equation*}
  for some $k_1,\dots,k_I\in\intint{0}{K-1}$ and some
  $\bfr^{(1)},\dots,\bfr^{(I)} \in \Z^M$, each $\alpha \in R$ can be written
  as
  \begin{equation*}
    \alpha =
    \sum_{k\in\intint{0}{K-1}} \sum_{\ell\in\intint{1}{L}} \sum_{\bfx\in\Z^M}
    a_{k,\ell,\bfx} \zeta^k \eta_\ell \bfeps^\bfx
  \end{equation*}
  with non-negative integers $a_{k,\ell,\bfx}$, and such that no
  relation of the form
  \begin{equation*}
    \zeta^{k} \eta_i \bfeps^\bfx = \eta_j \bfeps^\bfy, \quad i \neq j
  \end{equation*}
  with integer exponents and $k\in\Z$ holds.
\end{beh}


\begin{proof}[Proof of \behref{beh:existence}]
  Let $u_1,\ldots, u_I$ be as in \eqref{Eq:Unit}.
  Choose a $K$\nbd-th root of unity $\zeta \in \Gamma \cap
  R$ (note that the torsion group of $\Gamma$ is finite and cyclic)
  and multiplicatively independent $\eps_1,\dots,\eps_M \in \Gamma \cap
  R$ with $M \leq I$, such that
  \begin{equation*}
    u_i = \zeta^{k_i} \eps_1^{r_1^{(i)}} \dots \eps_M^{r_M^{(i)}}
    = \zeta^{k_i} \bfeps^{\bfr^{(i)}} \qquad (i\in\intint{1}{I})
  \end{equation*}
  holds for some $\bfr^{(1)},\ldots,\bfr^{(I)} \in \Z^M$. We set
  \begin{equation}\label{errr}
    r\colonequals \max\set*{r_m^{(i)}}{i\in\intint{1}{I},m\in\intint{1}{M}}
  \end{equation}
  and want to mention that we reference to that $r$ later in this section.

  Let us consider a finite subset $\set{\eta_1,\dots,\eta_L} \subset
  \cE$ such that all $\alpha \in R$ can be written as a linear
  combination
  \begin{equation*}
    \alpha =
    \sum_{k\in\intint{0}{K-1}} \sum_{\ell\in\intint{1}{L}} \sum_{\bfx\in\Z^M}
    a_{k,\ell,\bfx} \zeta^k \eta_\ell \bfeps^\bfx
  \end{equation*}
  with $a_{k,\ell,\bfx}\in\Z$ (which is possible since $\cE$ finitely generates
  $R$ as $\Z$\nbd-module). We can (and do) choose that finite subset such
  that no relation of the form
  \begin{equation*}
    \zeta^k \eta_i \bfeps^\bfx = \eta_j \bfeps^\bfy, \quad i \neq j
  \end{equation*}
  with integer exponents and $k\in\Z$ holds.

  Note that $\zeta^k \eta_\ell \bfeps^\bfx \in \Gamma \cap R$.
  Furthermore, we can choose the coefficients $a_{k,\ell,\bfx}$ to be
  non-negative, since, by assumption, we have $-1\in\Gamma$, which
  allows us to choose the ``signs'' in our representation.
\end{proof}


From now on we suppose that $\zeta$, $\eta_1,\dots,\eta_L$, and
$\bfeps$ are fixed and given as in~\behref{beh:existence}. We use the
following convention on representations.


\begin{convention}
  Let $\alpha \in R$ and suppose we have a representation of $\alpha$
  where the coefficients are denoted by $a_{k,\ell,\bfx}$ (small
  Latin letter with some index), i.e., $\alpha$ is written as
  \begin{equation*}
    \alpha =
    \sum_{k\in\intint{0}{K-1}} \sum_{\ell\in\intint{1}{L}} \sum_{\bfx\in\Z^M}
    a_{k,\ell,\bfx} \zeta^k \eta_\ell \bfeps^\bfx
  \end{equation*}
  We denote by $A \subset \Z^M$ (capital Latin letter corresponding to
  the letter used for the coefficients) the minimal $M$-dimensional
  interval including all $\bfx$ with $a_{k,\ell,\bfx} \neq 0$. We write
  \begin{equation*}
    A = \intint{\underline{A}_1}{\overline{A}_1} \times \dots
    \times \intint{\underline{A}_M}{\overline{A}_M}.
  \end{equation*}
  We omit the range of the indices $k$ and $\ell$ since they are
  always the same. Thus $\alpha$ will be written as
  \begin{equation*}
    \alpha = \sum_{k,\ell}\sum_{\bfx\in A}
    a_{k,\ell,\bfx} \zeta^k \eta_\ell \bfeps^\bfx.
  \end{equation*}
\end{convention}


An important quantity is the weight of a representation. It is defined
as follows.


\begin{definition}\label{Def:weight}
  Let $\alpha\in R$ and suppose we have a representation as
  in~\behref{beh:existence}, i.e.,
  \begin{equation*}
    \alpha = \sum_{k,\ell}\sum_{\bfx\in A}
    a_{k,\ell,\bfx} \zeta^k \eta_\ell \bfeps^\bfx.
  \end{equation*}
  with non-negative integers $a_{k,\ell,\bfx}$. We call the minimum of all
  \begin{equation*}
    \sum_{k,\ell} \sum_{\bfx\in A} a_{k,\ell,\bfx}
  \end{equation*}
  among all possible representations (as above) of $\alpha$ the \emph{total
    weight of $\alpha$} and write $w_\alpha$ for it.
\end{definition}


As mentioned in the plan of the proof of Theorem~\ref{Th:GenBelcher}, we apply
Equation~(\ref{Eq:Unit}) to an existing representation to get another one. In
the following paragraph, we define that replacement step, which will then
always be denoted by \sternsymbol{}.


\begin{stern}[Replacement Step]\label{step:replace}
  Suppose we have a representation
  \begin{equation*}
    \alpha = \sum_{k,\ell}\sum_{\bfx\in A}
    a_{k,\ell,\bfx} \zeta^k \eta_\ell \bfeps^\bfx,
  \end{equation*}
  where at least one coefficient $a_{k,\ell,\bfx} \geq n$. We get a new
  representation by applying
  \begin{equation*}
    u_1 + \dots + u_I = n.
  \end{equation*}
  More precisely, if $u_i = \zeta^{k_i} \bfeps^{\bfr^{(i)}}$, then the
  coefficient $a_{k+k_i,\ell,\bfx+\bfr^{(i)}}$ is increased by $1$ for
  each $i \in \intint{1}{I}$ and $a_{k,\ell,\bfx}$ is replaced by
  $a_{k,\ell,\bfx}-n$.
\end{stern}


The following statements~\behref{beh:induction-base}
and~\behref{beh:few-summands} deal with two special cases.


\begin{beh}\label{beh:induction-base}
  If $\alpha\in R$ with $w_\alpha < I$, then
  Theorem~\ref{Th:GenBelcher} holds.
\end{beh}


We use that statement as the basis of our induction on the total
weight~$w$.


\begin{proof}[Proof of \behref{beh:induction-base}]
  Since $I \leq n$ we have $w_\alpha < n$. So the sum of all
  (non-negative) coefficients is smaller than $n$. Therefore all
  coefficients themselves are in $\intint{0}{n-1}$, which proves the
  theorem in that special case.
\end{proof}


From now on suppose we have an $\alpha \in R$ with a representation
\begin{equation*}
  \alpha = \sum_{k,\ell} \sum_{\bfx\in A}
  a_{k,\ell,\bfx} \zeta^k \eta_\ell \bfeps^\bfx,
\end{equation*}
which has minimal weight. That means, we have $w \colonequals w_\alpha$.


\begin{beh}\label{beh:few-summands}
  If $I<n$, then Theorem~\ref{Th:GenBelcher} holds.
\end{beh}


\begin{proof}[Proof of \behref{beh:few-summands}]
  Assume that there is a coefficient $a_{k,\ell,\bfx} \geq n$ in the
  representation of $\alpha$. We apply \sternsymbol{} to obtain a new
  representation. But because $I<n$, the new one has smaller total
  weight, which is a contradiction to the fact that $w$ was chosen minimal.
\end{proof}


Because of~\behref{beh:induction-base} and~\behref{beh:few-summands}
we suppose from now that $w \geq I$ and $I = n$.  As indicated
above, we prove Theorem~\ref{Th:GenBelcher} by induction on the total
weight $w$ of $\alpha$. More precisely we want to prove the following
claim by induction.


\begin{claim}\label{clm:main-claim}
  Assume that $\alpha \in R$ has a representation
  \begin{equation*}
    \alpha = \sum_{k,\ell} \sum_{\bfx\in A}
    a_{k,\ell,\bfx} \zeta^k \eta_\ell \bfeps^\bfx
  \end{equation*}
  with non-negative integers $a_{k,\ell,\bfx}$ and with minimal total
  weight $w$. Then $\alpha$ has also a representation of the form
  \begin{equation*}
    \alpha = \sum_{k,\ell} \sum_{\bfx\in G}
    g_{k,\ell,\bfx} \zeta^k \eta_\ell \bfeps^\bfx.
  \end{equation*}
  with integers $g_{k,\ell,\bfx} \in \intint{0}{n-1}$ and where
  \begin{equation*}
    G = \intint{\underline{A}_1-f(w)}{\overline{A}_1+f(w)}
    \times \dots \times \intint{\underline{A}_M-f(w)}{\overline{A}_M+f(w)}
  \end{equation*}
  with $f(1)=0$ and
  \begin{equation*}
    f(w) = T(w) r+f(w-1)\qquad (w\in \mathbb{N}),
  \end{equation*}
  where
  \begin{equation*}
    T(w) = \left(w+2(w-1)f(w-1)\right)^{Mw}
    K^w L^w.
  \end{equation*}
\end{claim}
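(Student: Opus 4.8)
The plan is to prove Claim~\ref{clm:main-claim} by induction on the total weight $w$, along the lines of the plan of proof of Theorem~\ref{Th:GenBelcher}. We may assume $\alpha\neq 0$, since $\omega_{R,\Gamma}(0)=0$. If $w<n$ there is nothing to prove: the non-negative integers $a_{k,\ell,\bfx}$ sum to $w<n$, so all of them lie in $\intint{0}{n-1}$ and one may take $G=A$ (note $f(w)\geq 0$, as $f(1)=0$ and $f$ is increasing); this is essentially~\behref{beh:induction-base}. So fix $w\geq n=I$ and assume the assertion of the claim for all total weights smaller than~$w$. Enlarging the integer $r$ from~\eqref{errr} if necessary we may assume $|r_m^{(i)}|\leq r$ for all $i,m$, so that one application of the replacement step~\sternsymbol{} enlarges each of the $M$ defining intervals of the bounding box of the support by at most $r$ on each side; moreover, since $I=n$, every application of~\sternsymbol{} leaves both the represented element $\alpha$ and the total weight unchanged (it lowers one coefficient by $n$ and raises $n$ coefficients by $1$).

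The first step is the \emph{reduction phase}: starting from the given representation of $\alpha$, repeatedly apply~\sternsymbol{} to some coefficient that is $\geq n$, and stop as soon as either \emph{(i)}~all coefficients of the current representation lie in $\intint{0}{n-1}$, or \emph{(ii)}~for some coordinate $m$ the support has a gap of width at least $2f(w-1)$ in direction $m$, i.e.\ there is an integer $c$ with no non-zero coefficient at any $\bfx$ having $x_m\in\intint{c+1}{c+2f(w-1)}$, while there are non-zero coefficients at some $\bfx$ with $x_m\leq c$ and at some $\bfx$ with $x_m\geq c+2f(w-1)+1$. The key point --- and the main obstacle of the whole proof --- is that this phase stops after at most $T(w)$ applications of~\sternsymbol{}. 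Indeed, suppose not; then one obtains $T(w)+1$ consecutive representations, each having a coefficient $\geq n$ (so that~\sternsymbol{} is applicable) and no gap of width $\geq 2f(w-1)$ in any coordinate. Such a representation has total weight $w$, hence at most $w$ positions $\bfx$ with a non-zero coefficient, and since it has no large gap each of its $M$ defining intervals has length at most $w+2(w-1)f(w-1)$. Translating the support so that the lower corner of its bounding box sits at the origin, there are then at most $\bigl((w+2(w-1)f(w-1))^{M}KL\bigr)^{w}=T(w)$ possible ``shapes'', so two of the $T(w)+1$ representations are translates of each other in the $\bfeps$-exponents by some vector $\mathbf v\in\Z^M$. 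But both represent $\alpha\neq 0$, and this translation multiplies the represented element by $\bfeps^{\mathbf v}$; multiplicative independence of $\eps_1,\dots,\eps_M$ thus forces $\mathbf v=\mathbf 0$, so the two representations coincide and the process has run into a cycle.

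To exclude cycles I would track two monotone quantities. The centre of mass $\mathbf c\colonequals\sum_{k,\ell,\bfx}\bfx\,a_{k,\ell,\bfx}\in\Z^M$ is shifted by the \emph{fixed} vector $\mathbf s\colonequals\sum_{i=1}^{I}\bfr^{(i)}$ by every application of~\sternsymbol{} (using $I=n$ once more), so a cycle would force $\mathbf s=\mathbf 0$. But then $\prod_{i=1}^{I}|u_i|=|\bfeps^{\mathbf s}|=1$, and combining the arithmetic--geometric mean inequality with the triangle inequality for $u_1+\dots+u_I=n$ and with the hypothesis $(u_1,\dots,u_I)\neq(1,\dots,1)$ yields the \emph{strict} bound $\sum_{i=1}^{I}|u_i|>n$ --- equality in AM--GM would force all $|u_i|=1$, and then equality in the triangle inequality would force every $u_i$ to be a non-negative real, hence $u_i=1$. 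Consequently the non-negative quantity $P\colonequals\sum_{k,\ell,\bfx}a_{k,\ell,\bfx}\,|\zeta^k\eta_\ell\bfeps^\bfx|$ strictly increases, namely by $|\eta_\ell\bfeps^\bfx|\bigl(\sum_{i=1}^{I}|u_i|-n\bigr)>0$ at the step applying~\sternsymbol{} at the position $(\bfx,k,\ell)$, so it cannot return to its value --- a contradiction. Hence the reduction phase halts after at most $t\leq T(w)$ steps with a representation of $\alpha$ of type~(i) or of type~(ii), whose bounding box $B$ satisfies $\underline{A}_m-rT(w)\leq\underline{B}_m$ and $\overline{B}_m\leq\overline{A}_m+rT(w)$ for all $m$.

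In type~(i) we are done, because the coefficients already lie in $\intint{0}{n-1}$ and $B$ is contained in $A$ enlarged by $rT(w)\leq T(w)r+f(w-1)=f(w)$ on each side, i.e.\ $B\subseteq G$. In type~(ii), fix a coordinate $m$ and the integer $c$ from~(ii) and write $\alpha=\alpha_L+\alpha_R$, where $\alpha_L$ collects the terms with $x_m\leq c$ and $\alpha_R$ those with $x_m\geq c+2f(w-1)+1$ (there being no term in between). Since $\zeta^k\eta_\ell\bfeps^\bfx\in\Gamma\cap R$, both $\alpha_L$ and $\alpha_R$ lie in $R$; by~(ii) each involves at least one term, so they carry representations of total weights $w^L,w^R\geq 1$ with $w^L+w^R=w$, whence $w^L,w^R\leq w-1$. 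As a minimal representation of $\alpha_L$ together with one of $\alpha_R$ represents $\alpha$, we get $w=w_\alpha\leq w_{\alpha_L}+w_{\alpha_R}\leq w^L+w^R=w$, so $w_{\alpha_L}=w^L$ and $w_{\alpha_R}=w^R$; hence these two representations have minimal total weight, and the induction hypothesis applies to each of them (for total weight below $n$ it is~\behref{beh:induction-base}). It yields representations of $\alpha_L$ and of $\alpha_R$ with all coefficients in $\intint{0}{n-1}$ whose bounding boxes are the respective ones enlarged by at most $f(w-1)$ on each side; in coordinate $m$ the $m$-th defining interval of the first lies in $\intint{\underline{B}_m-f(w-1)}{c+f(w-1)}$ and that of the second in $\intint{c+f(w-1)+1}{\overline{B}_m+f(w-1)}$, so they are disjoint in coordinate~$m$. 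Adding the two representations then gives a representation $\alpha=\sum_{k,\ell}\sum_{\bfx\in G}g_{k,\ell,\bfx}\zeta^k\eta_\ell\bfeps^\bfx$ with all coefficients in $\intint{0}{n-1}$ whose support lies in $B$ enlarged by $f(w-1)$ on each side, hence in $A$ enlarged by $rT(w)+f(w-1)=f(w)$, that is, in $G$. This closes the induction and proves the claim.
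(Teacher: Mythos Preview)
Your proof is correct and follows the paper's approach closely: induction on the total weight, repeated replacement steps until either all coefficients are small or a large gap appears, then splitting at the gap and invoking the induction hypothesis on both halves.

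There are two noteworthy differences. First, to rule out the ``cycle'' (two identical representations in the sequence) you split into cases according to whether $\mathbf s=\sum_i\bfr^{(i)}$ vanishes: if $\mathbf s\neq\mathbf 0$ the centre of mass drifts, and if $\mathbf s=\mathbf 0$ you combine AM--GM (using $\prod|u_i|=|\bfeps^{\mathbf s}|=1$) with the triangle inequality to get $\sum_i|u_i|>n$, so that the linear quantity $P=\sum a_{k,\ell,\bfx}|\zeta^k\eta_\ell\bfeps^\bfx|$ strictly increases. The paper avoids this case distinction by squaring: Cauchy--Schwarz together with the triangle inequality gives $\sum_i|u_i|^2>n$ unconditionally, so the single quantity $\sum a_{k,\ell,\bfx}|\bfeps^\bfx|^2$ strictly increases at every step. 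Your argument is perfectly valid but a little less direct. Second, after splitting $\alpha=\alpha_L+\alpha_R$ you explicitly verify that the two inherited representations have \emph{minimal} total weight (via $w\leq w_{\alpha_L}+w_{\alpha_R}\leq w^L+w^R=w$), which is exactly what is needed to apply the induction hypothesis as stated; the paper invokes the induction hypothesis at this point without spelling out this minimality check, so your version is in fact more careful here.
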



In order to prove Theorem~\ref{Th:GenBelcher} it is sufficient to prove
Claim~\ref{clm:main-claim}. As already mentioned, we use induction on the total
weight~$w$ of $\alpha$. Note that the induction basis has been shown above
in~\behref{beh:induction-base}.


Let us start by looking what happens if one applies \sternsymbol{}.


\begin{beh}\label{beh:ess-new-repr}
  Repeatedly applying \sternsymbol{} yields pairwise ``essentially different''
  representations of $\alpha$.

  More precisely, by repeatedly applying \sternsymbol{}, it is not possible to
  get two representations
  \begin{equation*}
    \alpha
    = \sum_{k,\ell} \sum_{\bfx \in A} a_{k,\ell,\bfx} \zeta^k \eta_\ell \bfeps^\bfx
    = \sum_{k,\ell} \sum_{\bfx \in A} a_{k,\ell,\bfx} \zeta^k \eta_\ell \bfeps^{\bfx+\bfL}
  \end{equation*}
  with some $\bfL \in \Z^M \setminus \set{\bfzero}$.
\end{beh}


\begin{proof}[Proof of \behref{beh:ess-new-repr}]
  Remember that we assumed $I=n$. First, let us note that we have
  \begin{equation*}
    n \leq \sum_{i\in\intint{1}{n}} \abs{u_i}
  \end{equation*}
  because of Equation~\eqref{Eq:Unit}. Using the Cauchy-Schwarz inequality
  yields
  \begin{equation*}
    n^2\leq \left(\sum_{i\in\intint{1}{n}} 1\cdot \abs{u_i}\right)^2
    \leq n\sum_{i\in\intint{1}{n}} \abs{u_i}^2.
  \end{equation*}
  Hence,
  \begin{equation*}
    n<\sum_{i\in\intint{1}{n}} \abs{u_i}^2,
  \end{equation*}
  unless $\abs{u_1}=\dots=\abs{u_n}=1$ and $\sum_i u_i=n$, i.e.,
  $u_1=\dots=u_n=1$. Since the trivial solution has been
  excluded, we see that every application of \sternsymbol{} makes the
  quantity
  \begin{equation}\label{eq:quantity-larger}
    \sum_{k,\ell} \sum_{\bfx \in A} a_{k,\ell,\bfx}
    \left(\abs{\eps_1}^{x_1} \dots \abs{\eps_M}^{x_M}\right)^2
  \end{equation}
  larger, i.e., the quantity~\eqref{eq:quantity-larger} coming from
  coefficients $a_{k,\ell,\bfx}'$ is larger than~\eqref{eq:quantity-larger}
  from $a_{k,\ell,\bfx}$, where the $a_{k,\ell,\bfx}'$ are the
  coefficients after an application of \sternsymbol{} on a representation with
  coefficients  $a_{k,\ell,\bfx}$. Note that the $\eps_1,\dots,\eps_M$ are
  fixed, cf.\ statement~\behref{beh:existence}.

  Hence, repeatedly applying \sternsymbol{} produces pairwise disjoint
  representations. Moreover, we cannot get the same representation up to linear
  translation in the exponents twice, i.e., we cannot get representations
  \begin{equation*}
    \alpha
    = \sum_{k,\ell} \sum_{\bfx \in A} a_{k,\ell,\bfx} \zeta^k \eta_\ell \bfeps^\bfx
    = \sum_{k,\ell} \sum_{\bfx \in A} a_{k,\ell,\bfx} \zeta^k \eta_\ell \bfeps^{\bfx+\bfL}
  \end{equation*}
  with $\bfL \in \Z^M \setminus \set{\bfzero}$. Such a relation would imply
  that $\bfeps^\bfL=1$, which is a contradiction to the assumption
  that the $\eps_1,\dots,\eps_M$ are multiplicatively
  independent.
\end{proof}


Now we look what happens after sufficiently many applications of
\sternsymbol{}.


\begin{beh}\label{beh:other-repr-exists}
  Set
  \begin{equation*}
    T(w) \colonequals
    \left(
      w+2(w-1)f(w-1) \right)^{Mw}
    K^w L^w
  \end{equation*}
  and suppose we have a representation
  \begin{equation*}
    \alpha = \sum_{k,\ell} \sum_{\bfx\in A}
    a_{k,\ell,\bfx} \zeta^k \eta_\ell \bfeps^\bfx.
  \end{equation*}
  After at most $T(w)$ applications of \sternsymbol{} we get a
  representation
  \begin{equation*}
    \alpha = \sum_{k,\ell}\sum_{\bfx\in B}
    b_{k,\ell,\bfx} \zeta^k \eta_\ell \bfeps^\bfx,
  \end{equation*}
  such that one of the following assertions is true:
  \begin{enumerate}
  \item\label{enu:case-finished} Each coefficient satisfies $b_{k,\ell,\bfx} \in
    \intint{0}{n-1}$ and
    \begin{equation*}
      \overline{B}_m - \underline{B}_m
      \leq w + 2(w-1)f(w-1)
    \end{equation*}
    holds for all $m \in \intint{1}{M}$.
  \item\label{enu:case-gap} There exists an index $m$ such that
    \begin{equation*}
      \overline{B}_m - \underline{B}_m
      > w + 2(w-1)f(w-1)
    \end{equation*}
    holds.
  \end{enumerate}
\end{beh}


\begin{proof}[Proof of \behref{beh:other-repr-exists}]
  Each replacement step \sternsymbol{} yields an essentially different
  representation, see~\behref{beh:ess-new-repr}, and there are at most $T(w)$
  possibilities to distribute our new coefficients in an interval
  $\intint{0}{K-1}\times\intint{1}{L}\times B$ with
  \begin{equation*}
    \overline{B}_m - \underline{B}_m \leq w + 2(w-1)f(w-1)
  \end{equation*}
  for each $m$ with $1\leq m\leq M$. Therefore after at most $T(w)$ replacement
  steps we are either in case~\ref{enu:case-finished} or in
  case~\ref{enu:case-gap} of~\behref{beh:other-repr-exists}.
\end{proof}


\begin{beh}\label{beh:translation-small}
  With the setup and notations of~\behref{beh:other-repr-exists}, a possible
  ``translation of the indices'' stays small.

  More precisely, we have
  \begin{equation*}
    \max\set*{\abs{\underline{A}_m-\underline{B}_m}}{m\in\intint{1}{M}}
    \leq T(w) r,
  \end{equation*}
  and
  \begin{equation*}
    \max\set*{\abs{\overline{A}_m-\overline{B}_m}}{m\in\intint{1}{M}}
    \leq T(w) r,
  \end{equation*}
  where $r$ is as defined as in \eqref{errr}.
\end{beh}


\begin{proof}[Proof of \behref{beh:translation-small}]
  The quantity $r$ is the maximum of all exponents in the representation of the
  $u_i$ as powers of the $\eps_1,\dots,\eps_M$. Thus, an application of
  \sternsymbol{} can change the exponents, and therefore the upper and lower
  bounds, respectively, by at most $r$. We have at most $T(w)$ applications of
  \sternsymbol{}, so the statement follows.
\end{proof}


Now we look at the two different cases of
\behref{beh:other-repr-exists}. The first one leads to a result
directly, whereas in the second one we have to use the induction
hypothesis to get a representation as desired.


\begin{beh}\label{beh:case-finished}
  If we are in case~(\ref{enu:case-finished})
  of~\behref{beh:other-repr-exists}, then we are ``finished''.
\end{beh}


\begin{proof}[Proof of \behref{beh:case-finished}]
  Since
  \begin{equation*}
    \left|\overline{A}_m - \overline{B}_m\right|\leq T(w) r<T(w)r+f(w-1)=f(w)
  \end{equation*}
  and
  \begin{equation*}
    \left|\underline{A}_m - \underline{B}_m\right|\leq T(w) r<T(w)r+f(w-1)=f(w)
  \end{equation*}
  hold for each $m\in\mathbb{N}$ we have found a representation as
  desired in Claim~\ref{clm:main-claim}.
\end{proof}


\begin{beh}\label{beh:case-gap}
  If we are in case~(\ref{enu:case-gap})
  of~\behref{beh:other-repr-exists}, then we can split the
  representation into two parts and between them there is a ``large
  gap''.

  More precisely, there is a constant $c$ such that we can write $\alpha =
  \gamma + \delta$ with
  \begin{equation*}
    \gamma = \sum_{k,\ell} \sum_{\bfx \in B \atop x_m <c}
    b_{k,\ell,\bfx} \zeta^k\eta_\ell\bfeps^\bfx
  \end{equation*}
  and
  \begin{equation*}
    \delta = \sum_{k,\ell} \sum_{\bfx \in B \atop x_m >c + 2f(w-1)}
    b_{k,\ell,\bfx} \zeta^k\eta_\ell\bfeps^\bfx.
  \end{equation*}
 \end{beh}


\begin{proof}[Proof of \behref{beh:case-gap}]
  In case~\ref{enu:case-gap} of~\behref{beh:other-repr-exists} we have
  an index $m \in \intint{1}{M}$ with
  \begin{equation*}
    \overline{B}_m - \underline{B}_m
    \geq w + 2(w-1)f(w-1)
  \end{equation*}
  The total weight of $\alpha$ is $w$, so the representation
  \begin{equation*}
    \alpha = \sum_{k,\ell}\sum_{\bfx\in B}
    B_{k,\ell,\bfx} \zeta^k \eta_\ell \bfeps^\bfx,
  \end{equation*}
  has at most $w$ non-zero coefficients. Therefore, by the pigeon hole
  principle we can find an interval $J$ of length at least $2f(w-1)$
  and with the property that all coefficients $a_{\bfx, i}$ fulfilling
  $x_m \in J$ are zero. Therefore we can split up $\alpha$ as mentioned.
\end{proof}


\begin{beh}\label{beh:do-induction-step}
  If we have the splitting described in~\behref{beh:case-gap}, then
  Claim~\ref{clm:main-claim} follows for weight~$w$.
\end{beh}


\begin{proof}[Proof of \behref{beh:do-induction-step}]
  After renaming the intervals and coefficients, we have $\alpha =
  \gamma + \delta$ with
  \begin{equation*}
    \gamma = \sum_{k,\ell} \sum_{\bfx \in C}
    c_{k,\ell,\bfx} \zeta^k\eta_\ell\bfeps^\bfx
  \end{equation*}
  and
  \begin{equation*}
    \delta = \sum_{k,\ell} \sum_{\bfx \in D}
    d_{k,\ell,\bfx} \zeta^k\eta_\ell\bfeps^\bfx.
  \end{equation*}
  Both total weights~$w_\gamma$ and~$w_\delta$, respectively, are
  smaller than $w = w_\alpha$, so we can use induction hypothesis: We
  get representations
  \begin{equation}\label{gam}
    \gamma = \sum_{k,\ell} \sum_{\bfx \in E}
    e_{k,\ell,\bfx} \zeta^k\eta_\ell\bfeps^\bfx
  \end{equation}
  with $e_{k,\ell,\bfx} \in \intint{0}{n-1}$ and
  \begin{equation}\label{delt}
    \delta = \sum_{k,\ell} \sum_{\bfx \in F}
    f_{k,\ell,\bfx} \zeta^k\eta_\ell\bfeps^\bfx
  \end{equation}
  with $f_{k,\ell,\bfx} \in \intint{0}{n-1}$. The upper and lower bounds of the
  intervals in $C$ to $E$ differ by at most $f(w_\gamma) \leq f(w-1)$ in each
  coordinate. The same is valid for the intervals of~$D$ to $F$. Since the
  intervals in $C$ and $D$ were separated by intervals of length at least
  $2f(w-1)$, therefore the intervals in $E$ and $F$ are disjoint. In other
  words, the two representations in \eqref{gam} and \eqref{delt} do not
  overlap. So we can add these two representations and obtain
  \begin{equation*}
    \alpha = \sum_{k,\ell} \sum_{\bfx \in G}
    g_{k,\ell,\bfx} \zeta^k\eta_\ell\bfeps^\bfx
  \end{equation*}
  with $g_{k,\ell,\bfx} \in \intint{0}{n-1}$. We have
  \begin{equation*}
    \max \set*{\abs{\overline{G}_m-\overline{A}_m}}{m\in\intint{1}{M}}
    \leq T(w) r+f(w-1) = f(w)
  \end{equation*}
  and
  \begin{equation*}
    \max \set*{\abs{\underline{G}_m-\underline{A}_m}}{m\in\intint{1}{M}}
    \leq T(w) r+f(w-1) = f(w),
  \end{equation*}
  which finishes the proof.
\end{proof}

\section{The Case of Simplest Cubic Fields}
\label{sec:cubic-fields}

Let $a$ be an integer and let $\alpha$ be a root of the polynomial
\begin{equation*}
  X^3-(a-1)X^2-(a+2)X-1.
\end{equation*}
Then the family of real cubic fields $\Q(\alpha)$ is called the family of
Shanks' simplest cubic fields. These fields and the orders $\Z[\alpha]$ have
been investigated by several authors. In particular, in a recent paper of the
second and third author~\cite{Thuswaldner:2010} it was shown that the unit sum
height of the orders $\Z[\alpha]$ is $1$ in case of $a=0,1,2,3,4,6,13,55$ and
the unit sum height $\leq 2$ in case of $a=5$. Moreover, it was conjectured
that $\omega(\Z[\alpha])=1$ for all $a\in\Z$.

Using our main theorem we are able to prove the following result.

\begin{theorem}
  We have $\omega(\Z[\alpha])\leq 2$ for all $a\in\Z$.
\end{theorem}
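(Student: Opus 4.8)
The plan is to apply the generalized Belcher criterion, Theorem~\ref{Th:GenBelcher}, with $F=\Q(\alpha)$, $R=\Z[\alpha]$, and $\Gamma=\Z[\alpha]^{*}$ (the unit group of the order, which is finitely generated and contains $-1$), for the parameters $n=I=3$; the conclusion then reads precisely $\omega(\Z[\alpha])=\omega_{\Z[\alpha]^{*}}(\Z[\alpha])\le 2$. To set things up I would recall the standard structure of Shanks' simplest cubic fields: the map $x\mapsto -1/(x+1)$ sends roots of $X^{3}-(a-1)X^{2}-(a+2)X-1$ to roots and has order $3$, so putting $\alpha'=-1/(\alpha+1)$ and $\alpha''=-1/(\alpha'+1)$ yields the three roots (distinct, since the discriminant $(a^{2}+a+7)^{2}$ is a non-zero square), $\Q(\alpha)$ is cyclic Galois, and $\Z[\alpha]$ is stable under the Galois action — because $\alpha$ and $\alpha+1$ are units of $\Z[\alpha]$ we get $\alpha'=-(\alpha+1)^{-1}\in\Z[\alpha]$ and likewise $\alpha''\in\Z[\alpha]$. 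Note that $\alpha,\alpha',\alpha''$ are units of $\Z[\alpha]$ (their norm is the product of the roots, namely $1$), that $\alpha\alpha'\alpha''=1$, and that the cyclic relations $\alpha\alpha'+\alpha'+1=0$, $\alpha'\alpha''+\alpha''+1=0$, $\alpha''\alpha+\alpha+1=0$ hold.

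The decisive step is to exhibit a non-trivial solution of $u_{1}+u_{2}+u_{3}=3$ in units of $\Z[\alpha]$. I would take
\[
  u_{1}=\frac{\alpha'}{\alpha},\qquad u_{2}=\frac{\alpha''}{\alpha'},\qquad u_{3}=\frac{\alpha}{\alpha''},
\]
which are units, being quotients of units, and which lie in $R=\Z[\alpha]$: from $\alpha(\alpha''+1)=-1$ we get $\alpha^{-1}=-(\alpha''+1)$, and from $\alpha\alpha'\alpha''=1$ we get $\alpha'\alpha''=\alpha^{-1}=-(\alpha''+1)$, whence
\[
  u_{1}=\alpha'\alpha^{-1}=-\alpha'(\alpha''+1)=-\alpha'\alpha''-\alpha'=1+\alpha''-\alpha',
\]
and, applying the Galois automorphism cyclically, $u_{2}=1+\alpha-\alpha''$ and $u_{3}=1+\alpha'-\alpha$. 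Summing these three identities, the terms involving $\alpha,\alpha',\alpha''$ cancel and $u_{1}+u_{2}+u_{3}=3$. (Equivalently, $\sum_{i}u_{i}=-\sum_{i}\frac{1}{\alpha^{(i)}(\alpha^{(i)}+1)}=-\sum_{i}\bigl(\frac{1}{\alpha^{(i)}}-\frac{1}{\alpha^{(i)}+1}\bigr)$, which evaluates to $3$ by Vieta and $\sum_{i}\frac{1}{\alpha^{(i)}+1}=-f'(-1)/f(-1)$.) Since $\alpha\neq\alpha'$ we have $(u_{1},u_{2},u_{3})\neq(1,1,1)$, so this is a genuine non-trivial solution.

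It then remains only to check the hypotheses of Theorem~\ref{Th:GenBelcher}: $\Gamma=\Z[\alpha]^{*}$ is a finitely generated subgroup of $F^{*}$ containing $-1$, the finite set $\cE=\{1,\alpha,\alpha^{2}\}\subset\Gamma\cap R$ generates $\Z[\alpha]$ as a $\Z$-module, and $u_{1},u_{2},u_{3}\in\Gamma\cap R$; hence the theorem with $n=I=3$ gives $\omega(\Z[\alpha])\le 2$. The only real obstacle is producing the unit identity $u_{1}+u_{2}+u_{3}=3$; once it is at hand, the rest is routine verification. I would not expect this method to yield the stronger (conjectured) bound $\omega(\Z[\alpha])=1$ uniformly in $a$, since that would require a non-trivial solution of Belcher's original equation $u+v=2$ in units, whose existence for all $a$ is exactly the open problem of~\cite{Thuswaldner:2010}.
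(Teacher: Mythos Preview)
Your proof is correct and follows essentially the same strategy as the paper: apply Theorem~\ref{Th:GenBelcher} with $n=I=3$ by exhibiting a non-trivial solution of $u_1+u_2+u_3=3$ in $\Z[\alpha]^*$. In fact your three units coincide with the paper's: writing $\alpha_1=\alpha$, $\alpha_2=\alpha''=-1-1/\alpha$, and using $\alpha'=(\alpha_1\alpha_2)^{-1}$, your $(u_1,u_2,u_3)=(\alpha_1^{-2}\alpha_2^{-1},\,\alpha_1\alpha_2^{2},\,\alpha_1\alpha_2^{-1})$ is just a permutation of the triple the paper writes down explicitly as $\alpha_1\alpha_2^{2}+\alpha_1^{-2}\alpha_2^{-1}+\alpha_1\alpha_2^{-1}=3$. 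Your derivation via the Galois-symmetric expressions $u_i=1+\alpha^{(i+1)}-\alpha^{(i)}$ is a pleasant way to see why the sum telescopes to $3$, whereas the paper simply states the identity and its polynomial form; but the content is the same.
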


\begin{proof}
  First let us note some important facts on $\Q(\alpha)$ and
  $\Z[\alpha]$, see for example Shanks' original
  paper~\cite{Shanks:1974}. We know that $\Q(\alpha)$ is Galois over
  $\Q$ with Galois group $G=\{\mathrm id, \sigma,\sigma^2\}$ and with
  $\alpha_2=\sigma(\alpha)=-1-\frac 1\alpha$. If we set
  $\alpha_1\colonequals \alpha$, then $\alpha_1$ and $\alpha_2$ are a
  fundamental system of units. Now we know enough about the structure
  of $\Z[\alpha]$ to apply Theorem~\ref{Th:GenBelcher}.

  If we can find three units $u_1,u_2,u_3\in\Z[\alpha]^*$ such that
  $u_1+u_2+u_3=3$ and $u_i \neq 1$, then the theorem is a direct consequence of
  Theorem~\ref{Th:GenBelcher}. Indeed we have
  \begin{equation}\label{Eq:3Darst}
    \begin{split}
      3 &=\overbrace{(\alpha_1^2+(-a+2)\alpha_1-a)}^{=u_1} \\
      &\phantom{=}+ \overbrace{(-2\alpha_1^2+(2a-1)\alpha_1 +a+4)}^{=u_2} \\
      &\phantom{=}+ \overbrace{(\alpha_1^2+(-a-1)\alpha_1-1)}^{=u_3} \\
      &= \alpha_1\alpha_2^2+\alpha_1^{-2}\alpha_2^{-1}+\alpha_1\alpha_2^{-1}. 
      \qedhere
    \end{split}
  \end{equation}
\end{proof}


\section{Application to Signed Double-Base Expansions}
\label{sec:double-bases}


We start with the definition of a signed double-base expansion of an integer.


\begin{definition}[Signed Double-Base Expansion]
  Let $p$ and $q$ be different integers. Let $n$ be an
  integer with
  \begin{equation*}
    n = \sum_{i\in\N_0, j\in\N_0} d_{ij} p^i q^j,
  \end{equation*}
  where $d_{ij} \in \set{-1,0,1}$ and only finitely many $d_{ij}$ are
  non-zero. Then such a sum is called a \emph{signed $p$-$q$-double-base
    expansion of $n$.} The pair $(p,q)$ is called \emph{base pair}.
\end{definition}


A natural first question is, whether each integer has a signed double-base
expansion for a fixed base pair.

If one of the bases $p$ and $q$ is either $2$ or $3$, then existence follows
since every integer has a \emph{binary representation} (base~$2$ with digit set
$\set{0,1}$) and a \emph{balanced ternary representation} (base~$3$ with digit
set $\set{-1,0,1}$), respectively. To get the existence results for general
base pairs, we use the following theorem, cf.~\cite{Birch:1959}


\begin{theorem}[Birch]
  Let $p$ and $q$ be coprime integers. Then there is a positive integer
  $\f{N}{p,q}$ such that every integer larger than $\f{N}{p,q}$ may be
  expressed as a sum of distinct numbers of the form $p^iq^j$ all with
  non-negative integers $i$ and $j$.
\end{theorem}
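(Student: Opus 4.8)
The plan is to recast the statement, reduce it by a bootstrap to one ``local'' claim, and attack that claim by a greedy digit algorithm whose only delicate point is termination.

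\emph{Recasting, and the arithmetic input.} Collecting the terms $p^iq^j$ of a candidate representation by the value of~$j$, a nonnegative integer $n$ is a sum of distinct $p^iq^j$ if and only if
\[
  n=\sum_{j\ge0}P_j\,q^j
\]
for nonnegative integers $P_j$, only finitely many nonzero, each \emph{$p$-good}, i.e.\ with all base-$p$ digits in $\{0,1\}$ (equivalently, a sum of distinct powers of~$p$). If $2\in\{p,q\}$ every integer is representable ($P_0=n$ when $p=2$; the binary digits when $q=2$), so assume $2<p<q$; coprimality excludes $p=q$. The one number-theoretic input is a consequence of $\gcd(p,q)=1$: the additive subgroup of $\Z/q\Z$ generated by the residues $p^i\bmod q$ contains $p^0=1$, hence equals $\Z/q\Z$; concretely, if $p^N\equiv1\pmod q$ then the $p$-good numbers $1,\;1+p^N,\;1+p^N+p^{2N},\dots$ run through all residues modulo~$q$, and in each class one of them is at most a constant $C=C(p,q)$.

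\emph{The bootstrap.} Let $\mathcal R\subseteq\N_0$ be the set of sums of distinct $p^iq^j$. Raising every $q$-exponent by~$1$ and then adjoining a $p$-good summand at the vacated $0$-th $q$-level shows $q\mathcal R+\{p\text{-good numbers}\}\subseteq\mathcal R$; using the residues of size $\le C$ above, this turns $[a,b]\subseteq\mathcal R$ into $[qa+C,\,qb]\subseteq\mathcal R$ (for each residue~$\rho$, fill by an arithmetic progression of difference~$q$ all integers $\equiv\rho\pmod q$ in $[qa+C,qb]$). Consequently, if $[A,\,2qA]\subseteq\mathcal R$ for a single large integer~$A$, the intervals $[A_0,B_0]=[A,2qA]$ and $[A_{t+1},B_{t+1}]=[qA_t+C,\,qB_t]$ have lengths growing geometrically while the accumulated ``loss'' stays $O(A)$, so they overlap consecutively and their union is $[A,\infty)$; one may then take $\f{N}{p,q}=A-1$. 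Thus everything reduces to: \emph{for some large integer~$A$, every integer in $[A,2qA]$ is a sum of distinct $p^iq^j$.}

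\emph{The local claim, and the obstacle.} For such an $n$, pick $P_0$ among the special $p$-good numbers with $P_0\equiv n\pmod q$; as $n\ge A\ge C$ we get $0\le P_0\le n$, so $n^{(1)}\colonequals(n-P_0)/q\in\N_0$ with $n^{(1)}\le n/q$, and iterating yields $p$-good digits $P_0,P_1,\dots$ and remainders $n^{(t)}\le n/q^t$, eventually below~$C$. The \textbf{main obstacle} is termination: once $n^{(t)}<C$ the remainder need not be $p$-good, and the least admissible digit in its residue class may exceed it, so naive greed overshoots into negative remainders — indeed, some small integers (e.g.\ $7$ for $(p,q)=(3,5)$) truly have no representation, which is why $\f{N}{p,q}$ must be positive. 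I would handle this, in the spirit of the proof of Theorem~\ref{Th:GenBelcher}, by using the slack in the choice of the $P_t$: \emph{any} $p$-good number in the right residue class not exceeding the current remainder is admissible, not only the least; and while the remainder is still comfortably above~$C$, these admissible choices move $n^{(t+1)}$ over a set that — after iterating the manoeuvre boundedly often, and, if needed, refining the working modulus from $q$ to $q^m$ — is dense enough to be steered onto a value already in~$\mathcal R$. Making this precise requires one spacing estimate (comparing gaps between consecutive $p$-good numbers of a fixed class inside a range $[X,qX]$ with the length of the admissible-digit interval there), and that estimate determines how large $A$, hence $\f{N}{p,q}$, must be chosen; the rest is routine bookkeeping.
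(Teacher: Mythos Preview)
The paper does not prove Birch's theorem; it is quoted from \cite{Birch:1959} and used as a black box to deduce the existence of signed double-base expansions. So there is no proof in the paper to compare against, and your attempt must be judged on its own.

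Your argument is sound through the bootstrap. The recasting via $p$-good $q$-ary digits, the observation that $p$-good numbers cover every residue class modulo $q$ with representatives bounded by some $C=C(p,q)$, and the implication $[a,b]\subseteq\mathcal R \Rightarrow [qa+C,qb]\subseteq\mathcal R$ are all correct and cleanly argued. This genuinely reduces the theorem to producing a single interval $[A,2qA]\subseteq\mathcal R$.

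The gap is the local claim. You rightly note that naive greedy descent can stall once the remainder drops below $C$ (your example $n=7$ for $(p,q)=(3,5)$ is exactly the phenomenon), and you propose to repair this by exploiting the slack in the choice of $P_t$ to steer some $n^{(t)}$ onto a value already known to lie in $\mathcal R$. But at this point essentially nothing is known to lie in $\mathcal R$ except the $p$-good numbers themselves, so you are really asking that some remainder be made $p$-good; and whether the admissible digits are rich enough to force this is precisely the ``spacing estimate'' you defer. That estimate is not routine: for $p\ge 5$ the $p$-good numbers below $X$ number only about $X^{\log 2/\log p}<X^{1/2}$, so even two levels $P+qQ$ cannot cover all large integers by counting alone, and a bounded number of levels still needs a structural (not merely density) argument to rule out that the reachable set misses your target. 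You have located the crux accurately, but ``routine bookkeeping'' undersells it; as written, the proof is incomplete at its decisive step.
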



\begin{corollary}
  Let $p$ and $q$ be coprime integers. Then each integer has a signed
  $p$-$q$-double-base expansion.
\end{corollary}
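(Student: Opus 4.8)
The plan is to deduce the corollary directly from Birch's theorem by a single ``inflation'' trick, needing neither an induction nor a case split on the sign of $n$.

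First I would dispose of the degenerate base pairs. If $\min(\abs{p},\abs{q})\le 1$, say $\abs{q}\le 1$, then already $\sum_{j\in\N_0}d_{0j}\,p^{0}q^{j}$ runs through all of $\Z$ as the finitely many digits $d_{0j}\in\set{-1,0,1}$ vary, so every integer is trivially represented. When $p$ and $q$ share a sign, writing $p=\pm p'$, $q=\pm q'$ with $p',q'\ge 2$ and keeping track of the parities of the exponents reduces everything to the case $p,q\ge 2$. So from now on I assume $p,q\ge 2$ and $\gcd(p,q)=1$. Then $p$ and $q$ are multiplicatively independent --- if $p^iq^j=p^{i'}q^{j'}$, cancelling common powers and using coprimality forces $(i,j)=(i',j')$ --- so a ``sum of distinct numbers of the form $p^iq^j$'' is the same thing as a sum $\sum_{(i,j)\in S}p^iq^j$ over a finite \emph{set} $S\subseteq\N_0\times\N_0$.

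For the main step, let $n\in\Z$ be arbitrary. I would pick $a\in\N_0$ large enough that $p^{a}+n>\f{N}{p,q}$, which is possible because $\f{N}{p,q}-n$ is a fixed integer while $p^{a}\to\infty$. Applying Birch's theorem to the positive integer $n+p^{a}$ gives a finite $S\subseteq\N_0\times\N_0$ with $n+p^{a}=\sum_{(i,j)\in S}p^{i}q^{j}$, and therefore
\begin{equation*}
  n = \Bigl(\sum_{(i,j)\in S}p^{i}q^{j}\Bigr) - p^{a}q^{0}.
\end{equation*}
Reading off the coefficient $d_{ij}$ of $p^iq^j$ on the right-hand side: $d_{ij}=1$ for $(i,j)\in S\setminus\set{(a,0)}$, $d_{ij}=0$ for $(i,j)\notin S\cup\set{(a,0)}$, and $d_{a,0}\in\set{-1,0}$ (equal to $0$ precisely when $(a,0)\in S$). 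Hence all digits lie in $\set{-1,0,1}$, only finitely many are nonzero, and we have produced a signed $p$-$q$-double-base expansion of $n$.

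I do not expect a real obstacle here. The one point that needs care is the bookkeeping at the single pair $(a,0)$, where subtracting $p^{a}$ could in principle create a forbidden digit $-2$; this is ruled out exactly because Birch's theorem produces each power $p^iq^j$ with multiplicity at most one, so the contribution of $S$ at $(a,0)$ is at most $1$. I would also stress that no separate treatment of negative $n$ is needed: since $a$ may depend on $n$, even a very negative integer is inflated into the range where Birch's theorem applies.
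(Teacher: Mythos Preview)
Your argument is correct. The paper does not supply a proof of this corollary at all; it simply states it immediately after Birch's theorem, leaving the derivation implicit. Your ``inflation trick''---shifting $n$ by a large power $p^{a}$ into the range where Birch applies and then subtracting that single term back off---is exactly the kind of one-line deduction the paper is tacitly relying on, and you carry it out cleanly. The crucial observation that Birch's representation uses each $p^iq^j$ with multiplicity at most one, so the subtraction at $(a,0)$ cannot produce a digit outside $\{-1,0,1\}$, is correctly identified and handled.

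Two minor remarks on the preliminary reductions, neither of which affects the main argument. First, your treatment of the degenerate case $\abs{q}\le 1$ does not literally cover $q=0$: then $q^j=0$ for $j\ge 1$, so $\sum_j d_{0j}p^0q^j=d_{00}\in\{-1,0,1\}$ only. Of course $q=0$ together with coprimality forces $p=\pm 1$, and then one represents $n$ via $\sum_i d_{i0}p^i$ instead; you may want to phrase the degenerate case accordingly. Second, the clause ``when $p$ and $q$ share a sign'' is confusing---the sign reduction $p=\pm p'$, $q=\pm q'$ with $p',q'\ge 2$ works regardless of whether the signs agree, since multiplying $d_{ij}$ by $(\pm 1)^i(\pm 1)^j$ is a bijection of $\{-1,0,1\}$; just drop that clause.
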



Next we want to give an efficient algorithm that allows to calculate a signed
double base expansion of a given integer. Birch's theorem, or more precisely
the proof in~\cite{Birch:1959}, does not provide an efficient way to do that.
However, using our main result, there is a way to compute such expansions
efficiently at least for certain base pairs.


\begin{corollary}\label{cor:double-base-Z}
  Let $p$ and $q$ be coprime integers with absolute value at least $3$. If
  there are non-negative integers $x$ and $y$ such that
  \begin{equation}\label{eq:replace2}
    2 = \abs{p^x - q^y},
  \end{equation}
  then each integer has a signed $p$-$q$-double-base expansion which can be
  computed efficiently (there exists a polynomial time algorithm). In
  particular given a $p$-adic expansion of an integer $\alpha$, one has to apply
  \eqref{eq:replace2} at most $O(\log(\alpha)^2)$ times.
\end{corollary}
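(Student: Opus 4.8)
The plan is to get the existence statement straight from Theorem~\ref{Th:GenBelcher} and then to exploit the replacement step from its proof for the algorithmic part. First I would apply Theorem~\ref{Th:GenBelcher} with $F=\Q$, $R=\Z$, $\Gamma=\langle-1,p,q\rangle$, $\cE=\set{1}$ and --- crucially --- with $n=I=2$. Since $\gcd(p,q)=1$ we have $\Gamma\cap\Z=\set*{\pm p^{a}q^{b}}{a,b\geq 0}$, the set $\set{1}$ generates $\Z$ as a $\Z$-module, and $-1\in\Gamma$, so all hypotheses hold. Assuming without loss of generality $p^{x}-q^{y}=2$ (the case $q^{y}-p^{x}=2$ being symmetric), the pair $(p^{x},-q^{y})$ solves $u_{1}+u_{2}=2$ with $u_{1},u_{2}\in\Gamma\cap\Z$ and is non-trivial since $-q^{y}\neq 1$ (because $\abs{q}\geq 3$). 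Hence $\omega_{\Gamma}(\Z)\leq n-1=1$.

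Unwinding the definition, $\omega_{\Gamma}(\Z)\leq 1$ says that every integer $\alpha$ is a sum of \emph{pairwise distinct} elements of $\Gamma\cap\Z=\set*{\pm p^{c}q^{d}}{c,d\geq 0}$; collecting by monomial --- each $p^{c}q^{d}$ then occurring with coefficient in $\set{-1,0,1}$ --- this is exactly a signed $p$-$q$-double-base expansion of $\alpha$. Thus the existence of such expansions (and, for these $p,q$, the corollary to Birch's theorem above) is recovered with no recourse to Birch at all.

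For the algorithmic claim the bound $T(w)$ from the proof of Theorem~\ref{Th:GenBelcher} is hopelessly large, so instead I would run the replacement step directly on the given base-$\abs{p}$ expansion $\alpha=\sum_{i=0}^{k}c_{i}p^{i}$; note that, for $n=I=2$, one application of the step \sternsymbol{} is precisely one application of \eqref{eq:replace2}, namely using $2\,p^{c}q^{d}=p^{c+x}q^{d}-p^{c}q^{d+y}$ to turn a digit $d_{c,d}$ with $\abs{d_{c,d}}\geq 2$ into $d_{c,d}\mp 2$ while changing $d_{c+x,d}$ and $d_{c,d+y}$ by $\pm 1$. Here $k=O(\log\abs{\alpha})$, the total weight $\sum_{i}\abs{c_{i}}$ is $O(\log\abs{\alpha})$, and all mass initially lies in the row $q^{0}$. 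One first disposes of the degenerate cases ($p=3$ or $q=3$, where balanced ternary already is a signed double-base expansion; in all other cases every solution of \eqref{eq:replace2} has $x\geq 1$ and $y\geq 1$, since $p^{x}=3$ forces $p=3$ and $q^{y}=3$ forces $q=3$). In the remaining case, apply \eqref{eq:replace2} to the digits in increasing lexicographic order of $(q\text{-exponent},p\text{-exponent})$; one checks that the total weight never increases, that a geometric-series estimate bounds the carries --- hence all digits --- by $O(\abs{p})=O(1)$ inside each row, that the occurring $p$-exponents remain $\leq k+O(1)$, and that only $O(\log\abs{\alpha})$ rows ever acquire mass. Consequently the reduction terminates after $O\bigl((\log\abs{\alpha})^{2}\bigr)$ applications of \eqref{eq:replace2}, each of cost $O(\log\abs{\alpha})$, which gives the polynomial-time algorithm.

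The hard part will be this last estimate --- controlling how the support spreads under iterated applications of \eqref{eq:replace2}: showing that the digits created in each newly produced row stay uniformly bounded (so that only $O(1)$ further reductions per cell are needed) and that all occurring $p$- and $q$-exponents remain $O(\log\abs{\alpha})$. Choosing a workable order for the replacement steps and pinning down the constants is where the real effort lies; the existence statement, by contrast, falls out of Theorem~\ref{Th:GenBelcher} applied with $n=I=2$.
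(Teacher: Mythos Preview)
Your existence argument is exactly the paper's: apply Theorem~\ref{Th:GenBelcher} with $F=\Q$, $R=\Z$, $\Gamma=\langle -1,p,q\rangle$, $n=I=2$, using $(u_1,u_2)=(p^x,-q^y)$ (or the symmetric choice), and read off $\omega_\Gamma(\Z)\leq 1$.

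For the algorithmic part you have the right overall shape---process the representation one $q$-row at a time via the replacement step---but you stop short of the estimate and one of your intermediate claims is off. The assertion that the $p$-exponents stay $\leq k+O(1)$ is not correct: each application of \eqref{eq:replace2} can push the top $p$-exponent up by $x$, and you perform $\Theta(w)$ replacements in a single row, so the $p$-support genuinely grows. Fortunately this is irrelevant for counting replacement steps. The paper sidesteps all of your ``geometric-series'' and support-spread estimates with a one-line induction on the weight $w=\sum_i|a_i|$: while some $|a_i|\geq 2$, apply \sternsymbol{}; each application lowers the row-$q^0$ weight by at least $1$ and raises the row-$q^y$ weight by at most $1$, so after at most $w-1$ steps the bottom row has digits in $\{-1,0,1\}$ and the overflow $\beta$ in row $q^y$ has weight $\leq w-1$. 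By induction $\beta$ needs at most $\binom{w-1}{2}$ further steps, whence the total is $\binom{w-1}{2}+(w-1)=\binom{w}{2}=O\bigl((\log\alpha)^2\bigr)$. This replaces the part you flagged as ``the hard part'' with a two-line recursion; I would recommend adopting that argument rather than trying to control carries and support directly.
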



\begin{proof}
  We start to prove the first part of the corollary and therefore apply Theorem
  \ref{Th:GenBelcher} with $\F=\Q$, $R=\Z$ and $\Gamma$ is the multiplicative
  group generated by $-1,p$ and $q$.  Since by assumption $2=\pm(p^x-q^y)$ we
  have a solution to \eqref{Eq:Unit} and Theorem \ref{Th:GenBelcher} yields
  that $p$-$q$-double-base expansions exist.
    
  Now let us prove the statement on the existence of a polynomial time
  algorithm. Assume that for the integer $\alpha$ the $p$-adic expansion
  \begin{equation*}\label{eq:p-adic}
   \alpha=a_{0}+a_{1}p+\dots +a_{k} p^{k}
  \end{equation*}
  is given, with $a_{0},\ldots,a_{k}\in\intint{0}{p-1}$.  Let us note that the
  weight $w$ of this representation is at most $O(\log \alpha)$. Now the
  following claim yields the corollary.
\end{proof}
 
\begin{claim}
  Assume
  \begin{equation*}
    \alpha=\sum_{i\in\intint{0}{I}} a_i p^i
  \end{equation*}
  with $a_i\in \Z$ and $I\in\N_0$, and set $w = \sum_{i\in\intint{0}{I}}
  \abs{a_i}$. Then, after at most $\frac{w^2-w}2$ replacement steps
  \sternsymbol{} we arrive in a representation of the form
  \begin{equation*}
    \alpha=\sum_{j\in\intint{0}{J}} q^{jy} \sum_{k\in\intint{0}{K}} b_{k,j}p^k,
  \end{equation*}
  where the $b_{k,j}$ are integers with $\abs{b_{k,j}} \leq 1$, and $J,K\in\N_0$.
\end{claim}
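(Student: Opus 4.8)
The plan is to carry out the replacement steps~\sternsymbol{} in a carefully chosen order and then bound their number by a weight argument that proceeds row by row.

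First I would translate the setting. Here $F=\Q$, $R=\Z$, $\Gamma=\langle-1,p,q\rangle$, and the hypothesis $2=\pm(p^{x}-q^{y})$ from Corollary~\ref{cor:double-base-Z} is exactly a non\nobreakdash-trivial solution of~\eqref{Eq:Unit} with $n=I=2$. The torsion subgroup of $\Gamma$ is $\set{1,-1}$, so $\zeta=-1$; we take $\bfeps=\tuple{p,q}$, and since $\Z$ is generated by~$1$ as a $\Z$-module a single $\eta=1$ suffices. Absorbing the powers of $\zeta=-1$ into signs, a representation is simply $\alpha=\sum_{i,j}c_{ij}p^{i}q^{j}$ with $c_{ij}\in\Z$, and one application of~\sternsymbol{} ``at position $(i,j)$'', possible whenever $\abs{c_{ij}}\geq2$, replaces $2\operatorname{sgn}(c_{ij})p^{i}q^{j}$ by $\pm\operatorname{sgn}(c_{ij})\bigl(p^{i+x}q^{j}-p^{i}q^{j+y}\bigr)$; that is, it lowers $\abs{c_{ij}}$ by~$2$ and changes $c_{i+x,j}$ and $c_{i,j+y}$ by $\pm1$. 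Since~\sternsymbol{} only \emph{adds} $x$ to a $p$-exponent and~$y$ to a $q$-exponent, every representation reached from the starting one $\alpha=\sum_{i=0}^{I}a_{i}p^{i}$ has all $p$-exponents~$\geq0$ and all $q$-exponents non-negative multiples of~$y$; hence, as soon as all $\abs{c_{ij}}\leq1$, the representation automatically has the shape claimed.

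Next I would prescribe the schedule: run through the rows $j=0,y,2y,\dots$ in increasing order, and within a fixed row through the positions $i=0,1,2,\dots$ in increasing order, at each position applying~\sternsymbol{} until the coefficient there lies in $\set{-1,0,1}$. This is well defined and terminating: because~\sternsymbol{} strictly raises a $q$-exponent, nothing is ever fed back into a row already finished; and within a row~\sternsymbol{} at position~$i$ touches only position~$i$ (lowering it) and position~$i+x$ (which lies further right, hence unprocessed), so a single left\nobreakdash-to\nobreakdash-right sweep finishes the row, the sign of a coefficient never flips while it is being reduced, and consequently all the ``carries'' pushed into any single position share one sign and never cancel one another. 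Also, each application within a row strictly lowers that row's weight, so each row needs only finitely many steps.

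The heart of the matter, and the step I expect to be the main obstacle, is the count. For a row~$j$ let $W_{j}=\sum_{i}\abs{c_{ij}}$ denote its weight at the instant it is started, so $W_{0}=w$, and let $N_{j}$ be the number of applications of~\sternsymbol{} performed while processing row~$j$. If $v_{i}$ denotes the value of $\abs{c_{ij}}$ just before position~$i$ is processed, then position~$i$ costs $\lfloor v_{i}/2\rfloor$ applications, and $v_{i}\leq\abs{c^{(0)}_{ij}}+\lfloor v_{i-x}/2\rfloor$, where $c^{(0)}_{ij}$ is the coefficient when the row was started (position~$i$ receives at most $\lfloor v_{i-x}/2\rfloor$ from the left). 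Summing over~$i$ and reindexing gives $\sum_{i}\lceil v_{i}/2\rceil\leq W_{j}$, hence $N_{j}=\sum_{i}\lfloor v_{i}/2\rfloor\leq W_{j}-(\text{number of }i\text{ with }v_{i}\text{ odd})$. The crucial refinement is that, when $W_{j}\geq1$, there is such an~$i$: the rightmost position ever touched in row~$j$ must end with coefficient exactly $\pm1$ (otherwise its leftover would be pushed still further right), so its $v_{i}=1$ is odd. Hence $N_{j}\leq\max(W_{j}-1,0)$. Since each application in row~$j$ pushes exactly one term --- of a fixed sign, hence non\nobreakdash-cancelling --- into row~$j+y$, we obtain $W_{j+y}=N_{j}\leq\max(W_{j}-1,0)$, so $W_{ky}\leq\max(w-k,0)$ by induction on~$k$. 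Summing, the total number of applications equals $\sum_{k\geq0}N_{ky}=\sum_{m\geq1}W_{my}\leq\sum_{m=1}^{w-1}(w-m)=\tfrac{w^{2}-w}{2}$, which is exactly the bound claimed. The real work, I expect, lies in squeezing out the ``$-1$'' per row --- sharpening a soft ``weight is non\nobreakdash-increasing'' observation enough for the telescoping sum to land on the stated constant --- and in the sign bookkeeping needed to justify the equality $W_{j+y}=N_{j}$.
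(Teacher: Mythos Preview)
Your overall strategy --- process the representation row by row in the $q$-direction, bound the number of replacement steps in each row by $W_j-1$, observe that this is also the weight handed to the next row, and telescope --- is exactly the paper's induction on~$w$. So the architecture is right.

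There is, however, a genuine gap in your per-row bound. You argue that $N_j=\sum_i\lfloor v_i/2\rfloor\le W_j-\#\{i:v_i\text{ odd}\}$ and then claim the rightmost touched position~$i^*$ has $v_{i^*}=1$, forcing at least one odd $v_i$. This fails because carries can cancel against the \emph{initial} coefficient at $i^*$ (your non-cancellation remark only covers carries with one another). Concretely, take $2=p^x-q^y$, so the in-row carry is $+\operatorname{sgn}(c_i)$; start the row with $c_0=2$, $c_x=-1$, everything else zero ($W_j=3$). Processing $i=0$ gives $v_0=2$, one step, carry $+1$ to position $x$; then $v_x=\lvert{-1+1}\rvert=0$. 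The rightmost touched position is $x$, but $v_x=0$; in fact no $v_i$ is odd here. The desired inequality $N_j=1\le 2=W_j-1$ still holds, but not by your argument.

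The fix is the simpler observation the paper uses (and which you already have in embryo when you note termination): each step lowers the current row weight by at least~$1$, and a step is only applied when the row weight is at least~$2$. Hence after $N_j-1$ steps the row weight is still $\ge 2$, so $W_j-(N_j-1)\ge 2$, i.e.\ $N_j\le W_j-1$. With this in place, your equality $W_{j+y}=N_j$ (which is correct: pushes to position $(i,j+y)$ all share the sign $\mp\operatorname{sgn}(c_{ij})$ and row $j+y$ starts empty) gives $W_{(k+1)y}\le W_{ky}-1$ and the telescoping sum yields $\sum_k N_{ky}\le\frac{w^2-w}{2}$ as required.
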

  
\begin{proof}
  We prove the claim by induction on $w$. If $w\leq1$ the statement of the
  claim is obvious. Further, if all the $a_i$ are in $\set{-1,0,1}$ we are
  done. Therefore we assume that there is at least one index $i$ with
  $\abs{a_i}>1$.

  We now apply the replacement step \sternsymbol{} in the following way: If
  $a_i>1$, then $a_i$ is replaced by $a_i-2$, if $a_i<1$, then $a_i$ is
  replaced by $a_i+2$. After at most $w-1$ such steps, we get a new
  representation of the form
  \begin{equation*}
    \alpha = \sum_{i\in\intint{0}{I_c}} c_ip^i 
    + q^y \sum_{i\in\intint{0}{I_d}} d_ip^i,
  \end{equation*}
  $I_c,I_d\in\N_0$, $c_i,d_i\in\Z$, such that all $c_i$ fulfil
  $\abs{c_i}\leq1$. Note that no replacement step \sternsymbol{} increases the weight $w$.

  Now consider 
  \begin{equation*}
    \beta = \sum_{i\in\intint{0}{I_d}} d_ip^i.
  \end{equation*}
  The weight of $\beta$ fulfils
  \begin{equation*}
    w_\beta = \sum_{i\in\intint{0}{I_d}} \abs{d_i} \leq w-1,
  \end{equation*}
  since in each replacement step it is increased exactly by~$1$. Now, by
  induction, hypothesis we obtain a representation
  \begin{equation*}
    \beta = \sum_{j\in\intint{0}{J_e}} q^{jy} \sum_{k\in\intint{0}{K_e}} e_{k,j}p^k,
  \end{equation*}
  where the $e_{k,j}$ are integers with $\abs{e_{k,j}} \leq 1$ and
  $J_e,K_e\in\N_0$. Further, this can be done in $\frac{w_\beta^2-w_\beta}2$
  steps. Setting $b_{i,0} = c_i$ and $b_{i,k} = e_{i,k-1}$ for $k>0$ yields the
  desired representation. Moreover, this can be done
  with at most 
  \begin{equation*}
    \frac{w_\beta^2-w_\beta}2 + w-1
    \leq \frac{(w-1)(w-2)}2 + w-1
    = \frac{w(w-1)}2
  \end{equation*}
  applications of \sternsymbol{}, which finishes the proof of the claim.
\end{proof}


Now we want to give some examples for base pairs, where the corollary can be
used.


\begin{example}
  Let $(p,q)$ be a \emph{twin prime pair}, i.e., we have $q=p+2$ and both $p$
  and $q$ are primes. Then clearly
  \begin{equation*}
    2 = q - p,
  \end{equation*}
  so, by Corollary~\ref{cor:double-base-Z}, every integer has a signed
  $p$-$q$-double-base expansion, which can be calculated efficiently. 
\end{example}


\begin{example}
  Let $p=5$ and $q=23$. We have
  \begin{equation*}
    2 = 5^2 - 23,
  \end{equation*}
  therefore every integer has a signed $5$-$23$-double-base expansion, which
  can be calculated efficiently. Again Corollary~\ref{cor:double-base-Z} was
  used.

  To see some concrete expansions, we calculated the following:
  \begin{align*}
    995 &= - 5^{5} + 5^{4} + 5^{3} \cdot 23 -  5^{2} + 5 \cdot 23 + 23^{2} + 1 \\
    996 &= - 5^{3} + 5^{2} \cdot 23 + 23^{2} -  5 + 23 - 1 \\
    997 &= - 5^{3} + 5^{2} \cdot 23 + 23^{2} -  5 + 23 \\
    998 &= 5^{4} -  5^{3} -  5^{2} + 23^{2} -  5 - 1 \\
    999 &= 5^{4} -  5^{3} -  5^{2} + 23^{2} -  5 \\
    1000 &= 5^{4} -  5^{3} -  5^{2} + 23^{2} -  5 + 1 \\
    1001 &= - 5^{3} + 5^{2} \cdot 23 + 23^{2} + 23 - 1 \\
    1002 &= - 5^{3} + 5^{2} \cdot 23 + 23^{2} + 23 \\
    1003 &= 5^{4} -  5^{3} -  5^{2} + 23^{2} - 1
  \end{align*}
  In each case we started with an initial expansion, which is obtained
  by a greedy algorithm: For a $v\in\Z$ find the closest $5^i \cdot
  23^j$, change the coefficient for that base, and continue with $v -
  5^i \cdot 23^j$. Then we calculated the expansion by applying the
  equation $2 = 5^2 - 23$ as in the proof of
  Theorem~\ref{Th:GenBelcher}. The implementation\footnote{The source
    code can be found on \url{http://www.danielkrenn.at/belcher/}.
    Further a full list of expansions of the natural numbers up to
    $10000$ can be found there.}  was done in
  Sage~\cite{Stein-others:2012:sage-mathem-4.8}.
\end{example}


One can find pairs $(p,q)$ where Corollary~\ref{cor:double-base-Z} does not
work. The following remark discusses some of those pairs.


\begin{remark}
  Consider the equation
  \begin{equation}\label{eq:rem:thm-fails}
    2 = \abs{p^x - q^y}
  \end{equation}
  with non-negative integers $x$, $y$. A first example, where the corollary
  fails, is $p=5$ and $q=11$. Indeed, looking at Equation~\eqref{eq:rem:thm-fails}
  modulo~$5$ yields a contradiction. Another example is $p=7$ and $q=13$, where
  looking at \eqref{eq:rem:thm-fails} modulo~$7$, yields a contradiction. A
  third example is $p=7$ and $q=11$.
\end{remark}


So in the cases given in the remark above, as well as in a lot of other cases,
we cannot use the corollary to compute a signed double-base expansion
efficiently. This leads to the following question.


\begin{question}\label{con:exist-double-base}
  Is there an efficient (polynomial time) algorithm for each base pair $(p,q)$ to compute a
  signed $p$-$q$-double-base expansion for all integers?
\end{question}


There is also another way to use Theorem~\ref{Th:GenBelcher}. For some
combinations of $p$ and $q$ we can get a weaker result. First, we define an
extension of the signed double-base expansion: we allow negative exponents in
the $p^i q^j$, too.


\begin{definition}[Extended Signed Double-Base Expansion]
  Let $p$ and $q$ be different integers (usually coprime). Let $z\in\Q$. If we
  have
  \begin{equation*}
    z = \sum_{i\in\Z, j\in\Z} d_{ij} p^i q^j,
  \end{equation*}
  where $d_{ij} \in \set{-1,0,1}$ and only finitely many $d_{ij}$ are non-zero,
  then we call the sum an \emph{extended signed $p$-$q$-double-base expansion
    of $z$.}
\end{definition}


With that definition, we can prove the following corollary to
Theorem~\ref{Th:GenBelcher}.


\begin{corollary}\label{cor:double-base-Zp}
  Let $p$ and $q$ be coprime integers. If there are integers $a$, $b$, $c$, and
  $d$ with $(a,b,c,d)\neq(0,0,0,0)$ and such that
  \begin{equation}\label{eq:replace2-Zp}
    2 = p^a q^b \pm p^c q^d,
  \end{equation}
  then every element of $\Z[1/p,1/q]$ has
  an extended signed $p$-$q$-double-base expansion which can be computed
  efficiently (polynomial time algorithm).
\end{corollary}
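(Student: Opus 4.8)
\emph{Plan and the existence part.} The plan is to apply Theorem~\ref{Th:GenBelcher} with $F=\Q$, $R=\Z[1/p,1/q]$ and $\Gamma=\langle-1,p,q\rangle$, and then to turn the resulting procedure into a polynomial-time algorithm by imitating the proof of the Claim following Corollary~\ref{cor:double-base-Z}. Throughout I assume, as is implicit, that $p,q$ are coprime with $p,q\neq\pm1$, so that $p^iq^j=\pm1$ forces $i=j=0$. Put $u_1=p^aq^b$ and $u_2=\pm p^cq^d$; then \eqref{Eq:Unit} holds with $n=I=2$, and $(u_1,u_2)\neq(1,1)$ since $(a,b,c,d)\neq(0,0,0,0)$. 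As every $\pm p^iq^j$ lies in $R=\Z[1/p,1/q]$, we have $\Gamma\cap R=\Gamma$. The one formal point is that $\Z[1/p,1/q]$ is a localization of $\Z$ and not a finitely generated $\Z$-module, so Theorem~\ref{Th:GenBelcher} does not apply verbatim; but the finite generating set $\cE$ enters the proof only to produce the data of \behref{beh:existence}, and here one takes directly $\zeta=-1$ (so $K=2$), $M=2$ with $\eps_1=p$, $\eps_2=q$ (multiplicatively independent because $p,q$ are coprime and $\neq\pm1$), and $L=1$, $\eta_1=1$; indeed every element of $\Z[1/p,1/q]$ is by definition a $\Z$-combination of the monomials $p^{x_1}q^{x_2}$ and hence, using $-1\in\Gamma$, a non-negative integer combination of the $(-1)^kp^{x_1}q^{x_2}$. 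Since the rest of Section~\ref{sec:proof} uses only the conclusion of \behref{beh:existence}, the whole argument goes through and yields $\omega_\Gamma(\Z[1/p,1/q])\leq n-1=1$. Unwinding the definition, every $z\in\Z[1/p,1/q]$ is a sum of pairwise distinct elements of $\Gamma$; collecting the summands sharing the same monomial $p^iq^j$, the coefficient of $p^iq^j$ lands in $\set{-1,0,1}$, which is an extended signed $p$-$q$-double-base expansion.

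\emph{The efficiency part.} Write $z=\alpha\,p^{-s}q^{-t}$ with $\alpha\in\Z$, $s,t\in\N_0$, expand $\alpha$ in base $p$ with a fixed-size digit set, and substitute: this gives an initial extended expansion $z=\sum_i a_i\,p^{i-s}q^{-t}$ whose total weight $w=\sum_i\abs{a_i}$ and whose exponent ranges are $O(\ell)$, where $\ell$ is the bit length of $z$. Now apply the replacement step~\sternsymbol{} coming from $2=p^aq^b\pm p^cq^d$, exactly as in the proof of the Claim following Corollary~\ref{cor:double-base-Z}: pick a coefficient of absolute value $\geq 2$, reduce it by $2$, and distribute the two carries to the exponents obtained by shifting $(i,j)$ by $(a,b)$ and by $(c,d)$. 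In Corollary~\ref{cor:double-base-Z} the identity $2=\abs{p^x-q^y}$ has one pure power of $p$ and one pure power of $q$, so the carries split into a carry within the current $p$-adic layer and a carry into a \emph{new} $q^y$-layer of strictly smaller weight; here I would replace this by choosing an integer linear functional $\lambda$ on the exponent lattice $\Z^2$ with $\lambda(a,b)\geq 1$ and $\lambda(c,d)\geq 1$, which is possible because $(a,b)$ and $(c,d)$ do not span antipodal rays (this follows from $(a,b,c,d)\neq(0,0,0,0)$ by the rational root theorem applied to the one-variable polynomial equation satisfied by $p^aq^b$), and then processing the exponent lattice in order of increasing $\lambda$-value. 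Then every carry moves to a strictly larger $\lambda$-level, every already-treated level stays clean, and within a single level the process is a one-dimensional carry that terminates; the induction on $w$ from Corollary~\ref{cor:double-base-Z}, with the next $\lambda$-level now playing the role of the $q^y$-layer, gives a bound of $O(w^2)=O(\ell^2)$ applications of~\sternsymbol{}. Since each step costs $\mathrm{poly}(\ell)$ bit operations, this is a polynomial-time algorithm.

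\emph{Main obstacle.} The delicate step is this transfer of the inductive weight-decomposition argument from the clean identity $2=\abs{p^x-q^y}$ to the general identity $2=p^aq^b\pm p^cq^d$: one has to verify that the carries can be routed monotonically along a fixed direction of $\Z^2$ and that this still produces a quadratic step count. The degenerate sub-case in which $(a,b)$ and $(c,d)$ are parallel or one of them vanishes, so that all carries stay on a single line, must be treated separately, but there the process is literally one-dimensional carrying in a fixed base and terminates in $O(w)$ steps, so it is harmless.
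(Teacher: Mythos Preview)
Your existence argument is fine, and you correctly flag (and repair) the point that $\Z[1/p,1/q]$ is not a finitely generated $\Z$-module, which the paper applies Theorem~\ref{Th:GenBelcher} to without comment.

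The efficiency argument, however, has a genuine gap. You choose $\lambda$ with $\lambda(a,b)\geq 1$ and $\lambda(c,d)\geq 1$, so \emph{both} carries from an application of~\sternsymbol{} land at strictly higher $\lambda$-levels. But then your appeal to the induction in the Claim after Corollary~\ref{cor:double-base-Z} does not go through: that induction relies crucially on the fact that in $2=\pm(p^x-q^y)$ one carry stays in the current $q$-layer and exactly one unit of weight is sent to the next layer, so the weight of the next layer is $\leq w-1$. In your setup each step removes $2$ from the current level and adds $2$ to higher levels, so after clearing level~$0$ the remaining weight can still be essentially $w$; there is no strict weight drop to induct on, and nothing in your sketch bounds the number of $\lambda$-levels that get populated. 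Your remark that ``within a single level the process is a one-dimensional carry'' is also off: with both increments strictly positive there is no carry within a level at all. (Choosing instead $\lambda$ with $\lambda(a,b)=0$ and $\lambda(c,d)>0$ would recover the Corollary~\ref{cor:double-base-Z} structure, but such a $\lambda$ need not exist for arbitrary $(a,b),(c,d)$, and you would still have to argue that the in-level carry, now along a rank-one sublattice in a non-axis direction, terminates.)

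The paper avoids this by not treating the general identity at all. It first disposes of $p\in\{2,3\}$ or $q\in\{2,3\}$ by ordinary base-$p$ expansions, and then shows by elementary valuation arguments that for $p,q>3$ any solution of \eqref{eq:replace2-Zp} is, up to swapping $p$ and $q$, either of the form $2=\pm p^a\pm q^b$ (already Corollary~\ref{cor:double-base-Z}) or $2=\pm p^{-a}\pm p^{-a}q^b$ with $a,b>0$. In the latter case one carry lands at $(i-a,j)$ and the other at $(i-a,j+b)$, i.e.\ one stays in the same $q$-layer and one moves up---exactly the structure of the Claim, so the same $\tfrac{w^2-w}{2}$ bound applies verbatim. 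This classification is the step your proposal is missing.
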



\begin{remark}
If we have a solution to the equation in
Corollary~\ref{cor:double-base-Z}, then Corollary~\ref{cor:double-base-Zp}
works, too. But more can be said about the existence and efficient
computability of extended double-base expansions for the elements of
$\Z[1/p,1/q]$. If each integer has an efficient computable signed
$p$-$q$-double-base expansion, then each element of $\Z[1/p,1/q]$ has an
extended signed $p$-$q$-double-base expansion which can be computed
efficiently. This result is not difficult to prove.
\end{remark}

Now we prove the corollary.


\begin{proof}[Proof of Corollary~\ref{cor:double-base-Zp}]
  The proof of this corollary runs along the same lines as the proof
  of Corollary \ref{cor:double-base-Z}.
  
  We apply Theorem~\ref{Th:GenBelcher} with $\F=\Q$, $R=\Z[1/p,1/q]$
  and $\Gamma$ is the multiplicative group generated by $-1$, $p$ and
  $q$. Since, by assumption, $2=\pm(p^aq^b-p^cq^d)$ we have a solution
  to~\eqref{Eq:Unit}, Theorem~\ref{Th:GenBelcher} yields that
  $p$-$q$-double-base expansions exist.
  
  Next, we claim that we may assume $p$ and $q$ are odd and
  $p,q>3$. Indeed assuming that $p\in\set{2,3}$, then we can write
  $\alpha\in\Z[1/p,1/q]$ in the form
  \begin{equation*}
    \alpha =  \frac{\wt\alpha}{p^{x_p}q^{x_q}}
  \end{equation*}
  with $\wt\alpha\in\Z$ and appropriate exponents $x_p$ and
  $x_q$. Moreover, $\wt\alpha$ has a representation of the form
  \begin{equation*}
    \wt\alpha=\sum_{i\in\intint{0}{k}}a_i p^i
  \end{equation*}
  with $a_i\in\set{-1,0,1}$. However the computation of such a
  represenation can be done efficiently and takes polynomial time in
  the height $h(\alpha)$, where
  \begin{equation*}
    h(n/m)=\max \{\log \abs{n}, \log \abs{m}, 1\}
  \end{equation*}
  provided $n,m\in\Z$ are coprime.
  
  Since we may assume $p,q>3$, we want to show next that a solution to
  equation~\eqref{eq:replace2-Zp} necessarly takes the form
  \begin{equation*}
   2=\pm p^{-a}\pm p^{-a}q^b,
  \end{equation*}
  with $a,b\geq 0$. We observe that a solution to
  \eqref{eq:replace2-Zp} with $a,c>0$ or $b,c>0$ does not exist, since
  otherwise $p\divides2$ or $q\divides2$. Next we note that if $a\neq
  c$ ($b\neq d$ respectively) the $p$-adic valuation ($q$-adic
  valuation) on the right hand side of \eqref{eq:replace2-Zp} would be
  the minimum of $a$ and $c$ ($b$ and $d$ respectively) and in view of
  the left hand side, this minimum must be $0$.  Thus any solution to
  equation~\eqref{eq:replace2-Zp} must be of one of the following
  forms:
  \begin{align*}
   2 &= \pm p^aq^b \pm 1, \\
   2 &= \pm p^{-a}q^{-b} \pm p^{-a}q^{-b}, \\
   2 &= \pm p^{-a}\pm p^{-a}q^b, \\
   \intertext{or}
   2 &= \pm p^a\pm q^b,
  \end{align*}
  where $a$ and $b$ are positive integers. Obviously the first two
  cases have no solution and the last case has been treated in
  Corollary~\ref{cor:double-base-Z}.

  Now let us write $\alpha\in\Z[1/p,1/q]$ in the form
  \begin{equation*}
   \alpha=\frac{a_0+a_1p+\dots+a_kp^k}{q^{x_q}p^{x_p}}.
  \end{equation*}
  We are now in a similar situation as in the proof of
  Corollary~\ref{cor:double-base-Z}. Let $w=\sum_{i=1}^k
  \abs{a_i}$. Then by similar arguments as in
  Corollary~\ref{cor:double-base-Z} we find an extended signed
  $p$-$q$-double-base expansion of $\alpha$ with at most
  $\frac{w^2-w}2$ applications of \sternsymbol{}. Thus we have a
  polynomial in $h(\alpha)$ time algorithm.
\end{proof}


We can use the corollary proved above to get the following examples.


\begin{example}
  Let $p$ be a \emph{Sophie Germain prime} and $q=2p+1$. We obtain
  \begin{equation*}
    2 = q p^{-1} - p^{-1}.
  \end{equation*}
  Using Corollary~\ref{cor:double-base-Zp} yields that every element of
  $\Z[1/p,1/q]$ has an efficient computable extended signed $p$-$q$-double-base
  expansion. 

  The case when $p$ is a prime and $q=2p-1$ is a prime works analogously. 
  
\end{example}


The end of this section is dedicated to a short discussion. All the
results on efficient computability in this section needed a special
representation of $2$. We have given some pairs $(p,q)$ where the
methods given here do not work.

Further, one could ask, whether the representations we get have a
special structure. Of particular interest would be an algorithm to get
expansions with a small number of summands (small number of non-zero
digits). For a given base pair $(p,q)$ this leads to the following
question

\begin{question}
  How to compute a signed $p$-$q$-double-base expansion with minimal
  weight for a given integer?
\end{question}

A greedy approach for solving this question can be found in Berth\'e and
Imbert~\cite{Berthe-Imbert:2009}, some further results 
can be found in Dimitrov and
Howe~\cite{Dimitrov-Howe:2011:length-double-base-repr}.


\ifspringer
\bibliographystyle{spmpsci}      
\else
\bibliographystyle{abbrv}
\fi
\bibliography{Belcher}

\end{document}
